\numberwithin{equation}{section}
\theoremstyle{plain}
        \newtheorem{theorem}{Theorem}[section]
        \newtheorem{lemma}[theorem]{Lemma}
        \newtheorem{corollary}[theorem]{Corollary} 
        \newtheorem{definition}[theorem]{Definition} 
        \newtheorem{remark}[theorem]{Remark}  
        \newtheorem{example}[theorem]{Example}
        \newtheorem*{claim*}{Claim}
        \newtheorem*{fact*}{Fact} 
\newtheorem*{theorem*}{Theorem}
\newtheorem*{definition*}{Definition}
\newtheorem*{proposition*}{Proposition}
\let\oldmarginpar\marginpar
\renewcommand\marginpar[1]{\-\oldmarginpar[\raggedleft\footnotesize #1]
{\raggedright\footnotesize #1}}
\renewcommand\div{\text{div\,}}
\newcommand{\dist}{\mathrm{dist}\,}
\newcommand \loc {\text{loc}}
\newcommand{\R}{\mathbb{R}}
\newcommand{\N}{\mathbb{N}}
\newcommand\supp{\mathrm{supp}\,}
\renewcommand{\d}{\partial}
\newcommand{\dx}{\,dx}
\newcommand{\dt}{dt}
\newcommand{\Bop}{A}
\newcommand{\Bset}{E}
\newcommand{\phied}{\tilde{\phi}^{\epsilon,\delta}}
\newcommand\blfootnote[1]{%
  \begingroup
  \renewcommand\thefootnote{}\footnote{#1}%
  \addtocounter{footnote}{-1}%
  \endgroup
}
\title[Weak solutions of the Navier-Stokes equations with odd viscosity]{Weak solutions of the two-dimensional incompressible inhomogeneous Navier-Stokes equations in the presence of variable odd viscosity}
\author[R. Zimmermann]{Rebekka Zimmermann}
\begin{document}

\maketitle

\begin{abstract}
    We consider the two-dimensional incompressible inhomogeneous Navier-Stokes equations with odd viscosity, where the shear and the odd viscosity coefficients depend continuously on the unknown density function. 
    We establish the existence of weak solutions in both the evolutionary and stationary cases. 
    Furthermore, we investigate the limit of the weak solutions as the odd viscosity coefficient converges to a constant. 
    Lastly, we consider examples of stationary solutions for parallel, concentric and radial flows.
\end{abstract}

\blfootnote{AMS Subject Classification (2020): 35Q30, 76D03}

\section{Introduction}

This paper deals with 
a system of PDEs describing the motion of two-dimensional inhomogeneous incompressible fluids with odd viscosity. \textit{Odd} (or \textit{Hall}) viscosity is the anti-symmetric part of the viscosity tensor (in the sense of (\ref{viscositytensor}) below), and it is present in fluids with broken microscopic time-reversal symmetry and broken parity, see \cite{avron1998odd, avron1995viscosity}. We are going to discuss the physical model in Subsection \ref{odd:intro} below. 

The dynamics of such fluids can be described by the following generalized Navier-Stokes equations
\begin{equation}\label{odd}
   \left\{ \begin{array}{l}
      \d_t\rho+ \div(\rho u) =0,  \\
       \partial_t(\rho u) + \div(\rho u\otimes u) - \div(\mu_e(\nabla u + \nabla^Tu)) 
       - \div(\mu_o(\nabla u^\perp + \nabla^\perp u)) + \nabla \pi =\rho f,\\
       \div u=0, 
    \end{array}\right.
\end{equation}
where the scalar density function $\rho$, the velocity vector field $u = (u_1,u_2)^T$, and the scalar pressure $\pi$ are unknown and depend on the time and space variables $(t,x)\in [0,\infty)\times\Omega$. The domain $\Omega$ is either the whole space $\R^2$, or a bounded, open connected Lipschitz domain in $\R^2$. The variable shear and odd viscosity coefficients depend on the density function as follows:
\begin{align}\label{viscosity:coeff}
    \mu_e=\nu_e(\rho), \quad \mu_o=\nu_o(\rho),
\end{align}
for some given functions $\nu_e \in C(\R;[\mu_\ast,\mu^\ast])$, $\nu_o\in C(\R;[-\mu^\ast, \mu^\ast])$, with two positive constants $\mu^\ast,\mu_\ast>0$. The external force $f\in (L^2((0,\infty) \times\Omega))^2$ is given.

We denote 
\begin{align*}
    &u\otimes u = (u_iu_j)_{1\leq i,j\leq 2}, \quad \nabla u = (\partial_ju_i)_{1\leq i,j\leq 2}, \quad \nabla^T u = (\partial_i u_j)_{1\leq i,j\leq 2}, \\
    &x= \begin{pmatrix} x_1 \\ x_2 \end{pmatrix} , \quad \nabla = \begin{pmatrix} \partial_1 \\ \partial_2 \end{pmatrix}, \quad \nabla^\perp = \begin{pmatrix} -\partial_2 \\ \partial_1 \end{pmatrix}, \quad  u^\perp = \begin{pmatrix} -u_2 \\ u_1 \end{pmatrix},
\end{align*}
so that the strain tensors read as
\begin{align*}
    \nabla u + \nabla^T u 
    &= \begin{pmatrix} 2\d_1 u_1 & \d_2 u_1 + \d_1 u_2 \\ \d_2 u_1 + \d_1 u_2 & 2\d_2 u_2 \end{pmatrix}, \\
    \nabla u^\perp + \nabla^\perp u 
    &=\begin{pmatrix}-(\d_1 u_2+ \d_2 u_1)& \d_1 u_1-\d_2 u_2 \\ \d_1 u_1-\d_2 u_2 & (\d_1 u_2+\d_2 u_1) \end{pmatrix}.
\end{align*}

Equations (\ref{odd})$_1$ and (\ref{odd})$_2$ stand for the conservation of mass and the conservation of momentum, respectively, and (\ref{odd})$_3$ expresses the incompressibility of the fluid. 

The physics literature regarding odd viscosity in fluid dynamics is vast and has been an active area of research during the last thirty years. Despite of that, the notion of odd viscosity has as of yet received little attention from mathematicians. 

There is a large body of mathematical literature concerning the Navier-Stokes equations \eqref{odd} \textit{without} odd viscosity, i.e. $\mu_o\equiv0$:
\begin{equation}\label{NSintro}
   \left\{ \begin{array}{l}
      \d_t\rho+ \div(\rho u) =0,  \\
       \d_t(\rho u)+\div(\rho u\otimes u)-\div(\mu_e(\nabla u+\nabla^T u))+\nabla\pi=\rho f,\\
       \div u=0. 
    \end{array}\right.
\end{equation}
In particular, the existence of weak solutions of \eqref{NSintro} is well known in both the evolutionary and stationary cases \cite{he2020solvability, lions1996mathematical} (for the definition of weak solution we refer to Definitions \ref{weaksol} and \ref{def:weaksol,stat} below). 
It is natural to ask whether similar existence results hold if the odd viscosity is taken into account. 
In this article we give a positive answer to this question. 
More precisely, we establish the following facts:
\begin{enumerate}
    \item Weak solutions to \eqref{odd} exist in both the evolutionary and stationary cases (see Theorems \ref{main} and \ref{main:stat}).
    \item As the odd viscosity tends to some constant: $\mu_o\to\nu_o\equiv const$, any weak solution of \eqref{odd} which satisfies an energy inequality, converges (in a suitable sense) to a solution of the Naiver-Stokes equations without odd viscosity \eqref{NSintro} (see Corollaries \ref{rem:ev:conv} and \ref{rem:stat:conv}).
    \item For vanishing shear viscosity $\mu_e\equiv 0$, there do not in general exist stationary weak solutions of \eqref{odd} with $u\in H^1_{\mathrm{loc}}$ (see Example \ref{ex:radial}).
\end{enumerate}

In the following we review in more detail some known mathematical results concerning the existence theory of weak solutions to \eqref{NSintro} for the evolutionary and the stationary cases.


\subsection{The evolutionary Navier-Stokes system}

%
There has been extensive reseach on the Navier-Stokes equations \eqref{NSintro} without odd viscosity. 
When $\rho\equiv 1$ (and hence $\mu_e$ is a positive constant) the system turns into the classical homogeneous incompressible Navier-Stokes equation, which has been intensively studied. J. Leray \cite{leray1933etude} proved global-in-time existence of finite energy weak solutions in dimension $d=2,3$. For general $\rho$ and $\mu_e$ the existence of weak solutions was shown in P.-L. Lions' book \cite{lions1996mathematical}. There, P.-L. Lions proved the existence of weak solutions $(\rho,u)$ of (\ref{NSintro}) on various domains in the sense of Definition \ref{weaksol} below (with $\mu_o\equiv0$).

Here we are going to show that P.-L. Lion's results holds true even with the additional odd viscosity term. 
The main observation is that 
the \textit{a priori} energy balance, which is satisfied by smooth enough solutions, remains is preserved under odd viscosity. 
More precisely, the cancellation of the Frobenius inner product
\begin{align}\label{cancellation}
\begin{split}
    &(\nabla u^\perp + \nabla^\perp u): (\nabla u + \nabla^T u) 
    = \sum_{i,j=1}^2 (\nabla u^\perp + \nabla^\perp u)_{ij} (\nabla u + \nabla^T u)_{ij} \\
    &\quad = 2(\d_2u_2-\d_1u_1) (\d_1u_2+\d_2u_1) + 2(\d_1u_1 -\d_2u_2)(\d_2u_1 + \d_1u_2) \\
    &\quad =0
\end{split}
\end{align}
holds and hence, a priori, the energy equality holds when we test (\ref{odd})$_2$ by $u$:
\begin{align}\label{energy:eq}
\begin{split}
    &\int_\Omega \rho\vert u\vert^2\dx + \int_0^t\int_\Omega \mu_e \vert \nabla u +\nabla^Tu\vert^2\dx dt' \\
    &= \int_\Omega \rho_0 \vert u_0\vert^2 \dx + 2\int_0^t \int_\Omega \rho f\cdot u\dx dt', \quad \forall t>0 .
\end{split}
\end{align}
The energy \textit{in}equality is known to hold for P.-L. Lions' weak solutions (i.e. if $\mu_o\equiv 0$). The weak solutions we obtain for (\ref{odd}) satisfy the same energy inequality (see (\ref{energy:ineq}) below). This entails in particular that odd viscosity does not affect the energy of the fluid. 


\subsection{The stationary Navier-Stokes system}

The stationary counterpart of the equations (\ref{odd}) reads as follows:
\begin{equation}\label{odd:stat}
   \left\{ \begin{array}{l}
       \div(\rho u\otimes u) - \div(\mu_e(\nabla u + \nabla^Tu)) - \div(\mu_o(\nabla u^\perp + \nabla^\perp u)) + \nabla \pi = f,\\
       \div u=0,\; \div(\rho u) =0 , 
    \end{array}\right.
\end{equation}
%
For $\mu_o\equiv0$ previous results on (\ref{odd:stat}) include the following: If the flow is homogeneous, i.e. $\rho\equiv 1$ (and hence $\mu_e$ is a positive constant), then the system has been extensively studied 
and J. Leray \cite{leray1933etude} proved the existence of weak solutions $u\in (H^1(\Omega))^2$ on a simply connected domain $\Omega$ in $\R^2$. If the flow is inhomogeneous but $\mu_e>0$ is a constant, then N. N. Frolov \cite{frolov1993on} proved the existence and regularity of weak solutions of the form 
\begin{align}\label{frolov}
    (\rho,u)=(\eta(\phi), \nabla^\perp \phi) ,
\end{align}
where $\phi$ denotes the stream function of $u$, and $\eta$ is a H\"older continuous function. This Ansatz ensures that the divergence-free conditions (\ref{odd:stat})$_2$ are automatically satisfied: Formally we have 
\begin{align*}
    \div(u)&=\div(\nabla^\perp\phi)=0, \\
    \div(\rho u)&=\div(\eta(\phi)\nabla^\perp\phi) = \eta'(\phi)\nabla\phi \cdot \nabla^\perp \phi =0,
\end{align*}
and if $\phi\in H^2_{\mathrm{loc}}$, (\ref{odd:stat})$_2$ holds not only formally, but also in the sense of distributions. 

The H\"older continuity of $\eta$ in \cite{frolov1993on} was relaxed to continuity in \cite{santos2002stationary}. The existence result of weak solutions with \textit{variable} viscosity $\mu_e=\nu_e(\rho)$ was established by Z. He and X. Liao in \cite{he2020solvability}. The weak solutions there are also of Frolov's form (\ref{frolov}).

Assuming the form (\ref{frolov}) not only ensures that (\ref{odd:stat})$_2$ is satisfied, but it also has the advantage that the original problem (\ref{odd:stat}) for $(\rho,u)$ reduces to solving a fourth order elliptic equation for $\phi$. More precisely, applying the two-dimensional curl operator $\nabla^\perp \cdot$ to the momentum equation (\ref{odd:stat})$_1$ yields 
\begin{align*}
    \mathcal{L}\phi = -\nabla^\perp\cdot f + \nabla^\perp \cdot \div(\eta(\phi)\nabla^\perp\phi \otimes \nabla^\perp\phi) ,
\end{align*}
where the fourth order elliptic operator $\mathcal{L}$ is computed below in Section \ref{stationary}. It then remains to study the existence of weak solutions to this equation. The elliptic equation for the stream function has also been used in \cite{he2020solvability} with a different fourth order elliptic operator. The key observation is that the equation stays elliptic in the presence of odd viscosity. 

We are going to explain in this article how the proof from \cite{he2020solvability} can be modified to show a similar existence result in the presence of non-vanishing odd viscosity, $\mu_o\not\equiv 0$, in the momentum equation. In analogy to \cite{he2020solvability} we also consider examples of flows under certain symmetry assumptions on the density, and analyse the effects of the presence of odd viscosity on the velocity vector field.

\smallskip


In what follows, we give a brief explanation of the odd viscosity term in (\ref{odd})$_2$, and give an overview of previous physical results on this model. To the best of our knowledge there is much less mathematical research on hydrodynamic equations with odd viscosity, which is why this introduction focuses on the physical results.

\subsection{Odd viscosity: Properties and previous results}\label{odd:intro}

Viscosity of a fluid typically measures the resistance of the fluid to velocity gradients. It is expressed mathematically by a tensor $\eta_{ijkl}$ that acts as a coefficient of proportionality between viscous stress $\sigma_{ij}$ and strain rate $\partial_k u_l$, namely $\sigma_{ij}=\eta_{ijkl}\partial_k u_l$ \cite{landau1987fluid}. 
Symmetry under both parity and time-reversal are conditions which are satisfied by conventional fluids at thermal equilibrium, and in that case Onsager reciprocal relation \cite{onsager1931reciprocal} demands that $\eta_{ijkl}$ must be symmetric in the sense that $\eta_{ijkl}=\eta_{klij}$. Fluids in which time-reversal and parity are broken exhibit non-dissipative viscosity that is odd under each of these symmetries, which leads to the presence of an antisymmetric part in the viscosity tensor:
\begin{align}\label{viscositytensor}
    \eta_{ijkl}=\eta^e_{ijkl}+\eta^o_{ijkl}, \quad \text{with} \quad \eta^e_{ijkl} = \eta^e_{klij}, \quad \eta^o_{ijkl}=-\eta^o_{klij} .
\end{align}
In a two-dimensional incompressible isotropic fluid with broken parity and broken time-reversal, even and odd viscosity are specified by a single scalar $\mu_e$ and $\mu_o$, respectively, and hence the viscous stress tensor is characterized by two viscosity coefficients, one for the even part and one for the odd part \cite{avron1998odd, landau1987fluid} 
\begin{align*}
    \sigma =  \mu_e(\nabla u + \nabla^T u) + \mu_o(\nabla u^\perp +\nabla^\perp u) ,
\end{align*}
where $\mu_e$ is the kinematic shear (or ``even") viscosity, and $\mu_o$ the kinematic odd viscosity. When time-reversal and parity are broken, the viscosity tensor can have a non-vanishing odd part $\mu_o \not \equiv 0$, while it must vanish if at least one of these symmetries holds. 


The odd viscosity is quite different from the conventional shear viscosity, and significant odd viscosity may lead to counterintuitive properties of the fluid, which were illustrated with examples in \cite{avron1998odd}. For instance, odd viscosity can produce forces perpendicular to the direction of the fluid flow. 
%
An important difference between the two viscosity coefficients is that the shear viscosity $\mu_e$ is associated with dissipation, while the odd viscosity $\mu_o$ is of non-dissipative nature. 

The interest in odd viscosity was motivated by the seminal paper by J. E. Avron, R. Seiler and P. G. Zograf \cite{avron1995viscosity}, where it was shown that in general, quantum Hall states have non vanishing odd viscosity. Since then the role of odd viscosity in the context of quantum Hall fluids has been an active area of research, see \cite{abanov2013effective, bradlyn2012kubo, haldane2011geometrical, hoyos2014hall, hoyos2012hall, read2009non} and references therein. 
In classical fluids, odd viscosity can appear in various systems including polyatomic gases \cite{knaap1967heat}, 
chiral active fluids \cite{banerjee2017odd, soni2019odd, souslov2019topological}, magnetized plasmas \cite{pitaevskii2012physical} and fluids of vortices \cite{wiegmann2014anomalous}.
%
%
In three dimensions, odd terms in the viscosity tensor have been known for some time 
in hydrodynamic theories of superfluid He-3A \cite{vollhardt2013superfluid}.



In the following we will review some results on the constant and variable viscosity cases.

\subsubsection{Constant Viscosity}

When the density $\rho$ is positive and constant, say $\rho\equiv 1$, then by (\ref{viscosity:coeff}), $\mu_e>0$ and $\mu_o\in\R\setminus\{0\}$ are constants, and the system (\ref{odd}) turns into the homogeneous incompressible Navier-Stokes equations with odd viscosity
\begin{align}\label{odd:hom}
    \d_t u + u\cdot\nabla u - \mu_e\Delta u - \mu_o\Delta u^\perp + \nabla\pi=0, \quad \div u=0.
\end{align}
Much research has been done on related free-surface problems \cite{abanov2019free, abanov2018odd, doak2022nonlinear, monteiro2021nonlinear}. 
%

The authors in \cite{granero2022motion} considered equation (\ref{odd:hom}) with $\mu_e=0$ and a free-surface boundary, under the additional assumption that the fluid is irrotational: $\nabla^\perp \cdot u=0$. They developed asymptotic models of surface waves and proved well-posedness of these models in an analytic function space and also in a Sobolev setting.

In \cite{avron1998odd} J. E. Avron studied the odd viscosity effects in classical two-dimensional hydrodynamics by examining solutions to the wave equation as well as the homogeneous Navier-Stokes equations where the stress tensor is dominated by odd viscosity. These effects were shown to be subtle in the case when the fluid is incompressible, and they are most prominent in compressible flows. It has also been shown that if the fluid is almost incompressible, the odd viscosity effects are most visible at the dynamical boundary subject to no-stress or free-surface boundary conditions \cite{abanov2018odd, ganeshan2017odd}. Further investigations on observable effects of odd viscosity in classical two-dimensional incompressible hydrodynamics were conducted in \cite{banerjee2017odd, lapa2014swimming, lucas2014phenomenology}.
 

One important feature of the homogeneous incompressible Navier-Stokes equations with odd viscosity (\ref{odd:hom}) is that odd viscosity effects are absent when the fluid is spread on the entire plane or confined in rigid domains with no-slip boundary conditions. Indeed, due to the incompressibility of the fluid, the odd part of the viscosity tensor can be written as a gradient of the vorticity of the flow: $\Delta u^\perp =  -\nabla\omega$, where $\omega= \nabla^\perp \cdot u$, so that taking the two-dimensional curl yields the equation for the vorticity
\begin{align*}
    \partial_t\omega + u\cdot\nabla\omega = \mu_e\Delta\omega ,
\end{align*}
which is independent of $\mu_o$. Hence, vorticity is generated only by the symmetric part of the viscosity tensor, and as a consequence, the effects of odd viscosity on the flow of an incompressible fluid can come only through boundary conditions. For example, the signature of $\mu_o$ is present in surface waves and in the interface between two fluids governed by kinematic and no-stress boundary conditions, which explicitly depends on the odd viscosity \cite{ganeshan2017odd}. 

It is easy to see that for the same reason the systems (\ref{odd}) and (\ref{NSintro}) are equivalent if $\mu_o$ is a constant. Nevertheless, for density-dependent viscosity coefficient $\mu_o$ as in (\ref{odd}), it is in general not possible to write the odd viscosity term as a gradient, and $\mu_o$ might influence the flow. 


\subsubsection{Variable Viscosity}
In \cite{abanov2020hydrodynamics} the authors considered compressible fluids satisfying the strictly non-dissipative ($\mu_e\equiv0$) equations 
\begin{equation*}
   \left\{ \begin{array}{l}
      \d_t\rho+ \div(\rho u) =0,  \\
       \partial_t(\rho u) + \div(\rho u\otimes u) - \div(\nu_o\rho (\nabla u^\perp + \nabla^\perp u)) + \nabla (p(\rho)) =-\rho (B u^\perp + E),
    \end{array}\right.
\end{equation*}
where $B$ and $E=(E_1,E_2)$ are electromagnetic fields and $\nu_o$ is a real non-zero constant, i.e., equations (\ref{odd})$_{1,2}$ with $\mu_e\equiv0$ and $\mu_o = \nu_o\rho$ and an external force $f=Bu^\perp + E$ depending on $u$. These equations were paired with free-surface boundary conditions. The authors studied questions related to the variational formalism for the free-surface dynamics, linear surface waves and the incompressible limit.

The incompressible counterpart of the system in \cite{abanov2020hydrodynamics} was treated in \cite{fanelli2022well, fanelli2024effective} with the same choice for $\mu_o$ and no external force, and they proved local-in-time existence and uniqueness in the Sobolev setting $H^s(\R^2)$ for $s>2$, provided, among other assumptions, that the initial data satisfies $\rho_0-1\in H^{s+1}(\R^2)$ and $u_0\in (H^s(\R^2))^2$. These high regularity assumptions on the initial data were crucial for the authors to be able to exploit an underlying hyperbolic structure of (\ref{odd})$_2$ when $\mu_e\equiv0$. More precisely, working with a new set of variables 
\begin{align*}
    \omega = \nabla^\perp\cdot u, \quad \eta = \nabla^\perp\cdot (\rho u), \quad \theta = \eta-\Delta\rho ,
\end{align*}
allows one to resort to general theory of transport equations since the quantities $\omega$ and $\theta$ both satisfy simple transport equations transported by $H^s(\R^2)$ vector fields. It was also explained in \cite{fanelli2022well} about the challenges of obtaining a priori estimates for higher order derivatives when the shear viscosity term vanishes, due to the odd viscosity leading to a loss of derivatives. 

In a recent paper \cite{fanelli2024effective} the system with $\mu_e\equiv 0$ and $\mu_o=\nu_o\rho$ was reformulated into a system bearing strong similarities to the ideal magnetohydrodynamics equations. By use of this formulation the authors proved local well-posedness in a Besov space setting and provided a lower bound on the lifespan of the solution.

Here we will take $\mu_e\geq\mu_\ast>0$, and the presence of the shear viscosity term in (\ref{odd})$_2$ is crucial to obtain higher order estimates for the velocity $u$, if the density is only bounded, without any regularity assumptions.

\subsection{Organisation of the paper} The rest of this article is structured as follows. In Section \ref{main:result} we state the main existence result of weak solutions to the evolutionary and the stationary Navier-Stokes equations (\ref{odd}) and (\ref{odd:stat}), respectively. Section \ref{odd:proof} is concerned with the existence proof in the evolutionary case and Section \ref{stationary} with that of the stationary case. In Section \ref{stationary} we also consider examples of stationary parallel, concentric and radial flows.


\section{Main Results}\label{main:result}

In this section we give the definition of weak solutions of the systems (\ref{odd}) and (\ref{odd:stat}), and we state the main results of this paper.

\subsection{The evolutionary system}

We first fix a domain $\Omega\subset\R^2$ as in one of the following three cases. 
\begin{enumerate}
    \item \textit{(Dirichlet case)}: $\Omega$ is an open, bounded, connected domain in $\R^2$ with a Lipschitz boundary $\partial \Omega$. We look for solutions satisfying the non-slip boundary condition $u=0$ on $\partial\Omega$. 
    \item \textit{(Periodic case)}: $\Omega$ is a rectangle $(0,L_1)\times (0\times L_2)$, where $L_1,L_2>0$. We look for solutions $(\rho,u)$ which are periodic in $x_i$ of period $L_i$, $i=1,2$, respectively. 
    \item \textit{($\R^2$ case)}: $\Omega$ is the whole plane $\R^2$. We look for solutions $u\in L^\infty_{\mathrm{loc}}(0,\infty; (L^2(\R^2))^2)$. 
\end{enumerate}
We pose the following assumptions on the initial data: Let the initial density have a positive upper and lower bound
\begin{align}\label{rho:init}
    \rho_0\in L^\infty(\Omega), \quad 0< \rho_\ast\leq\rho_0\leq\rho^\ast \text{ a.e. in } \Omega,
\end{align}
and the initial velocity vector field
\begin{align}\label{u:init}
    u_0\in (L^2(\Omega))^2. 
\end{align}
Let $f\in (L^2((0,\infty)\times\Omega))^2$ be an external force. In the periodic case we consider $\rho_0$, $u_0$ and $f$ as functions defined on $\R^2$ by extending them periodically.

We define weak solutions for the evolutionary Navier-Stokes equations (\ref{odd}) in the following way.
\begin{definition}[Weak solutions of the evolutionary system]\label{weaksol}
We call a pair $(\rho,u)$ a weak solution of (\ref{odd}) with initial data given by (\ref{rho:init}) and (\ref{u:init}), if one of the following is satisfied for all $T>0$.
\begin{enumerate}
    \item \textit{(Dirichlet case)}: $\rho\in L^\infty((0,\infty)\times \Omega)\cap C([0,\infty);L^p(\Omega))$, $\forall p\in [1,\infty)$, $u\in L^2(0,T;(H_0^1(\Omega))^2)$;
    \item \textit{(Periodic case)}: $\rho\in L^\infty((0,\infty)\times \Omega)\cap C([0,\infty);L^p(\Omega))$, $\forall p\in [1,\infty)$, $u\in L^2(0,T;(H^1(\Omega))^2)$, $\rho$ and $u$ periodic in $x_i$ of period $L_i$, $i=1,2$;
    \item \textit{($\R^2$ case)}: $\rho\in L^\infty((0,\infty)\times \R^2)\cap C([0,\infty); L^p_{\mathrm{loc}}(\R^2))$, $\forall p\in [1,\infty)$, $u\in L^2(0,T;(H^1(\R^2))^2)$;
\end{enumerate}
and additionally there holds
\begin{align}
        &\div u=0, \quad \text{in the sense of distribution,} \label{uweak:div} \\
        &- \int_{\Omega} \rho_0\psi(0) \dx -\int_0^\infty \int_{\Omega} \rho\d_t \psi \dx\dt = \int_0^\infty \int_{\Omega} \rho u \cdot \nabla \psi \dx\dt , \label{rhoweak} \\
        &- \int_{\Omega} \rho_0u_0\cdot \varphi(0) \dx+\int_0^\infty \int_{\Omega} -\rho u\cdot \d_t\varphi -  \rho (u \otimes u) : \nabla\varphi \label{uweak} \\
        &+ \Bigl(\frac{\mu_e}{2}(\nabla u + \nabla^T u) + \frac{\mu_o}{2}(\nabla u^{\perp} + \nabla^\perp u)\Bigr) : (\nabla \varphi +\nabla^T\varphi) \dx\dt \nonumber \\
        &= \int_0^\infty\int_{\Omega}\rho f\cdot\varphi \dx\dt, \nonumber
    \end{align}
for all $\psi\in C_c^\infty ([0,\infty)\times \Omega)$, $\varphi\in (C_c^\infty ([0,\infty)\times \Omega))^2$, with $\div\varphi=0$, in the Dirichlet and whole plane case, or for all $\psi\in C_c^\infty([0,\infty);C^\infty(\Omega))$, $\varphi\in (C_c^\infty([0,\infty);C^\infty(\Omega)))^2$, with $\div \varphi=0$, and $\psi$, $\varphi$ being periodic in $x_i$ of period $L_i$, $i=1,2$, 
in the periodic case. Here the Frobenius inner product is defined by $A:B=\sum_{i,j=1}^2 a_{ij}b_{ij}$ for matrices $A=(a_{ij})_{i,j=1,2},B=(b_{ij})_{i,j=1,2}\in\R^{2\times 2}$.
\end{definition}

Our main result concerning the existence of weak solutions to the Navier-Stokes system (\ref{odd}) reads as follows.

\begin{theorem}[Existence of weak solutions of the evolutionary system]\label{main}
Let $\Omega$ be as in one of the three cases above, and let $\rho_0$ and $u_0$ be given by (\ref{rho:init}) and (\ref{u:init}), respectively. Then there exists at least one weak solution $(\rho,u)$ to the system (\ref{odd}) with initial data $\rho_0$ and $u_0$. Furthermore, this weak solution satisfies the energy inequality  
\begin{align}\label{energy:ineq}
\begin{split}
    &\int_\Omega \rho\vert u\vert^2\dx + \int_0^t\int_\Omega \mu_e \vert \nabla u +\nabla^Tu\vert^2\dx dt' \\
    &\leq \int_\Omega \rho_0 \vert u_0\vert^2 \dx + 2\int_0^t \int_\Omega \rho f\cdot u\dx dt', \quad \forall t>0 .
\end{split}
\end{align}
\end{theorem}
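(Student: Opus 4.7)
The plan is to mimic the construction of P.-L. Lions for \eqref{NSintro} and verify that the presence of the odd viscosity term does not destroy any of the a priori estimates, the compactness arguments or the passage to the limit. Concretely, I would build a family $(\rho_n,u_n)$ of approximate solutions by Galerkin truncation of the velocity in an $H^2$-regular divergence-free basis, coupled with the transport equation for $\rho$ driven by the (smooth-in-space) approximate velocity. In the $\R^2$ case one first restricts to an exhausting sequence of balls with Dirichlet data and then diagonalizes. Because $u_n$ is Lipschitz in space, the method of characteristics preserves the bounds $\rho_\ast\leq\rho_n\leq\rho^\ast$, whence $\mu_e^n=\nu_e(\rho_n)\in[\mu_\ast,\mu^\ast]$ and $\mu_o^n=\nu_o(\rho_n)\in[-\mu^\ast,\mu^\ast]$.

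The fundamental a priori bound comes from testing the $n$-th approximate momentum equation by $u_n$: the Frobenius identity \eqref{cancellation} annihilates the odd-viscosity contribution pointwise, so the resulting energy balance coincides with the one valid in the Lions setting. Combined with the coercivity $\mu_e^n\geq\mu_\ast$, Korn's inequality and Gr\"onwall, this yields
\begin{align*}
\|\sqrt{\rho_n}\,u_n\|_{L^\infty_tL^2_x}^2 + \mu_\ast\|\nabla u_n+\nabla^T u_n\|_{L^2_{t,x}}^2 \leq C\bigl(\|\sqrt{\rho_0}\,u_0\|_{L^2}^2 + \|f\|_{L^2_{t,x}}^2\bigr),
\end{align*}
so that $u_n$ is bounded in $L^2(0,T;H^1)$ (respectively $L^2(0,T;H^1_0)$ in the Dirichlet case) and $\rho_n$ is bounded in $L^\infty$.

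Compactness is then obtained in two steps. For the density, the DiPerna-Lions theory applied to $\partial_t\rho_n+\div(\rho_n u_n)=0$ with $u_n$ uniformly bounded in $L^2_tH^1_x$ and divergence-free produces renormalized solutions and strong convergence $\rho_n\to\rho$ in $C([0,T];L^p_{\loc}(\Omega))$ for every $p\in[1,\infty)$; continuity of $\nu_e$ and $\nu_o$ together with dominated convergence upgrades this to strong convergence of $\mu_e^n$ and $\mu_o^n$ in every $L^p_{t,x,\loc}$. For the velocity, one estimates $\partial_t(\rho_n u_n)$ in $L^2(0,T;H^{-s}_{\loc})$ for some $s\geq 1$; the new term $\div(\mu_o^n(\nabla u_n^\perp+\nabla^\perp u_n))$ is uniformly bounded in $L^2_tH^{-1}_x$ and therefore harmless. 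Aubin-Lions then gives strong convergence of $\rho_n u_n$, hence of $\sqrt{\rho_n}\,u_n$, in $L^2(0,T;L^2_{\loc})$.

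The final step is to pass to the limit in \eqref{rhoweak}--\eqref{uweak}. The continuity equation and the linear time-derivative term in \eqref{uweak} are handled by the strong/weak-$\ast$ convergence of $\rho_n$ and $\rho_n u_n$; the convective term $\rho_n u_n\otimes u_n$ follows from the strong convergence of $\sqrt{\rho_n}\,u_n$, the weak convergence of $u_n$ in $L^2_tH^1_x$ and Sobolev embedding in two dimensions; and both viscous terms pass by pairing the strong $L^p$-convergence of $\mu_e^n$ and $\mu_o^n$ with the weak $L^2$-convergence of the strain tensors $\nabla u_n+\nabla^T u_n$ and $\nabla u_n^\perp+\nabla^\perp u_n$. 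The energy inequality \eqref{energy:ineq} then drops out of the discrete identity by weak lower semicontinuity. The step I expect to be the most delicate is the identification of the limit of the convective term in the inhomogeneous setting, which, as in Lions, hinges on the strong compactness of $\rho_n$ supplied by DiPerna-Lions; once that is available, the odd-viscosity term poses no additional analytic difficulty, since its structure with respect to $(\rho,\nabla u)$ is entirely parallel to the shear-viscosity term, and its contribution to the energy budget is annihilated by \eqref{cancellation}.
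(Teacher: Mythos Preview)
Your proposal is correct and follows essentially the same route as the paper: regularize, construct Galerkin approximants via a fixed-point argument for the coupled transport/momentum system, derive the energy identity using the cancellation \eqref{cancellation}, obtain strong compactness of $\rho_n$ (and hence of $\mu_e^n,\mu_o^n$) via DiPerna--Lions/\cite{lions1996mathematical}, pass to the limit, and treat $\R^2$ by exhaustion with Dirichlet balls. The only structural difference is that the paper inserts an additional $\epsilon$-layer of data regularization before the Galerkin step, but this does not affect the argument you outline.
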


We follow a standard procedure to prove Theorem \ref{main} in Section \ref{odd:proof}. For the Dirichlet and periodic cases we use a Galerkin method as in \cite[Section 7]{feireisl2004dynamics}, and the whole plane case will be a consequence of the Dirichlet case for the derivation of which we follow the lines of \cite{lions1996mathematical}. Here we have to deal with additional second-order derivative terms of the velocity in the momentum equation due to the presence of odd viscosity. 


Let us make a few comments on the weak solutions obtained in Theorem \ref{main}.

\begin{remark}\label{rem:ev}
As in \cite{lions1996mathematical}, the results can be improved in the following sense:

The assumption $\rho_\ast>0$ in (\ref{rho:init}) can be relaxed to $\rho_\ast=0$, where vacuum in the fluid is allowed. This however requires additional assumptions on the initial data, and in Definition \ref{weaksol} the condition $u\in L^2(0,T;(H^1(\Omega))^2)$ is replaced by $\sqrt{\rho}u\in L^\infty(0,T;(L^2(\Omega))^2)$ and $\nabla u \in L^2((0,T)\times \Omega))^{2 \times 2}$, $T>0$.
    

The solutions given in Theorem \ref{main} are continuous in the sense that
\begin{align*}
    \rho u,\sqrt{\rho}u,u\in C([0,T];L^2_w(\Omega)), \quad \forall T>0 ,
\end{align*}
where $L^2_w(\Omega)$ denotes the space $L^2(\Omega)$ endowed with the weak topology. 
%
\end{remark}

The energy inequality (\ref{energy:ineq}) entails the following property of our weak solutions: let $\nu_e\in C(\R;[\mu_\ast, \mu^\ast])$ be fixed. We are interested in what happens to a weak solution in the limit $\nu_o\to c_0$ for some constant $c_0$, and are wondering whether the system (\ref{odd}) converges to (\ref{NSintro}) in some sense. The following corollary formulates this question more rigorously and gives a confirming answer.

\begin{corollary}\label{rem:ev:conv}
Let a sequence $(\nu_o^\epsilon)_{\epsilon\in (0,1)}$ of functions in $C(\R;[-\mu^\ast,\mu^\ast])$ be given, such that
\begin{align}\label{nuo:limit:cond}
    \Vert\nu_o^\epsilon-c_0\Vert_{C_b([\rho_\ast,\rho^\ast])}\to 0, \quad \epsilon\to 0,
\end{align}
for some constant $c_0\in [-\mu^\ast,\mu^\ast]$. For each $\epsilon \in (0,1)$ let $(\rho^\epsilon, u^\epsilon)$ denote a weak solution of (\ref{odd}) with odd viscosity coefficient $\nu_o^\epsilon$ which satisfies the energy inequality (\ref{energy:ineq}). 
Then there exists a function pair $(\rho,u)$ such that up to a subsequence
\begin{align}\label{main:limit:conv}
\begin{split}
    &\rho^{\epsilon}\to\rho, \quad \hspace*{2mm}\text{in } C([0,T];L^p(\Omega\cap B_R)), \quad \forall p\in [1,\infty),\, \forall T,R>0, \\
    &u^\epsilon\rightharpoonup^\ast u, \quad \text{in } L^\infty(0,T;L^2(\Omega))\cap L^2(0,T;H^1(\Omega)), \quad \forall T>0,
\end{split}
\end{align}
as $\epsilon\to 0$, and $(\rho,u)$ is a weak solution of (\ref{NSintro}), i.e. $(\rho,u)$ satisfies the conditions in Definition \ref{weaksol} with $\mu_o=0$.
\end{corollary}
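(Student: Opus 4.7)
The strategy is to exploit the uniform bounds provided by the energy inequality \eqref{energy:ineq}, apply Lions-type compactness to pass to the limit in the continuity equation and the convective product, and then use the fact that a \emph{constant} odd viscosity coefficient contributes nothing to the weak formulation when paired with divergence-free test functions. I will take the initial data fixed across $\epsilon$, as is implicit in the statement.

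First, I would derive uniform-in-$\epsilon$ bounds. The transport equation along the divergence-free field $u^\epsilon$ preserves the a.e.\ bounds $\rho_\ast\le \rho^\epsilon\le \rho^\ast$. Using $\mu_e=\nu_e(\rho^\epsilon)\ge \mu_\ast>0$ in the energy inequality \eqref{energy:ineq}, Gronwall applied to the forcing term yields uniform bounds on $\sqrt{\rho^\epsilon}\,u^\epsilon$ in $L^\infty(0,T;L^2)$ and on the symmetric gradient in $L^2((0,T)\times\Omega)$; Korn's inequality and the positive lower bound on $\rho^\epsilon$ then give a uniform bound on $u^\epsilon$ in $L^\infty(0,T;L^2)\cap L^2(0,T;H^1)$, locally on $B_R$ in the $\R^2$ case. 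Extracting subsequences produces weak/weak-$\ast$ limits $\rho$, $u$ and gives the second convergence in \eqref{main:limit:conv}.

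Next I would upgrade to strong convergence. Applying DiPerna-Lions renormalization to the continuity equation \eqref{odd}$_1$ (with $u^\epsilon$ in $L^2(0,T;H^1)$ divergence free and $\rho^\epsilon$ uniformly bounded in $L^\infty$), exactly as in the compactness step of the proof of Theorem \ref{main} carried out in Section \ref{odd:proof}, yields $\rho^\epsilon\to\rho$ in $C([0,T];L^p(\Omega\cap B_R))$ for every $p<\infty$. From the momentum equation I extract a uniform equicontinuity estimate on $\rho^\epsilon u^\epsilon$ in some $H^{-s}(\Omega)$; the additional odd viscosity contribution $\operatorname{div}(\mu_o^\epsilon(\nabla u^{\epsilon,\perp}+\nabla^\perp u^\epsilon))$ is uniformly bounded in $L^2(0,T;H^{-1})$ thanks to $|\mu_o^\epsilon|\le\mu^\ast$. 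Combined with the spatial compactness $H^1\hookrightarrow L^q_{\mathrm{loc}}$ for every $q<\infty$ (crucially, in dimension 2), Aubin-Lions gives strong convergence of $\rho^\epsilon u^\epsilon$ in $L^2(0,T;L^2(\Omega\cap B_R))$, and hence $\rho^\epsilon u^\epsilon\otimes u^\epsilon \to \rho u\otimes u$ in $\mathcal D'$.

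Finally I would pass to the limit termwise in \eqref{uweak}. For the shear part, continuity of $\nu_e$ and a.e.\ convergence of $\rho^\epsilon$ give $\nu_e(\rho^\epsilon)\to\nu_e(\rho)$ boundedly, hence strongly in every $L^p$ with $p<\infty$, which matches the weak $L^2$ convergence of the symmetric gradient. For the odd part I split $\nu_o^\epsilon(\rho^\epsilon)=c_0+(\nu_o^\epsilon(\rho^\epsilon)-c_0)$; the remainder satisfies
\begin{equation*}
\|\nu_o^\epsilon(\rho^\epsilon)-c_0\|_{L^\infty((0,\infty)\times\Omega)}\le \|\nu_o^\epsilon-c_0\|_{C_b([\rho_\ast,\rho^\ast])}\to 0
\end{equation*}
by \eqref{nuo:limit:cond} and the a.e.\ bound $\rho_\ast\le\rho^\epsilon\le\rho^\ast$, and therefore vanishes against the uniformly $L^2$-bounded factor $\nabla u^{\epsilon,\perp}+\nabla^\perp u^\epsilon$. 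The constant contribution $c_0$ vanishes identically: since $\nabla u^\perp+\nabla^\perp u$ is symmetric, an integration by parts combined with the identity $\Delta u^\perp=-\nabla(\nabla^\perp\cdot u)$ (valid because $\operatorname{div} u=0$) gives, for every divergence-free $\varphi$,
\begin{equation*}
\int_\Omega c_0(\nabla u^\perp+\nabla^\perp u):(\nabla\varphi+\nabla^T\varphi)\,dx = 2c_0\int_\Omega(\nabla^\perp\cdot u)\operatorname{div}\varphi\,dx = 0.
\end{equation*}
The limiting identity thus reduces to the weak formulation of \eqref{NSintro} for $(\rho,u)$. The main obstacle is the Aubin-Lions step, which requires simultaneous time-equicontinuity of $\rho^\epsilon u^\epsilon$ in the presence of the uniformly bounded but second-order odd viscosity term; once this compactness is in place, the algebraic elimination of the constant odd contribution is essentially immediate.
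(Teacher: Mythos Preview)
Your proposal is correct and follows essentially the same route as the paper: uniform energy bounds, Lions-type compactness (what you call DiPerna--Lions/Aubin--Lions is precisely the content of the results from \cite{lions1996mathematical} invoked in Section~\ref{odd:proof}), and then elimination of the constant odd-viscosity contribution in the limit. The only cosmetic difference is in the cancellation step: the paper verifies \eqref{uvarphi:cancel} by an explicit two-fold integration by parts, whereas you appeal to the identity $\Delta u^\perp=-\nabla(\nabla^\perp\!\cdot u)$ for divergence-free $u$ and pair it against $\operatorname{div}\varphi=0$; both arguments encode the same fact and are valid at the $H^1$ regularity level after moving all derivatives onto the smooth test function.
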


Corollary \ref{rem:ev:conv} is proved in Subsection \ref{ev:conv:proof} below. The convergence ``(\ref{odd})$\to$(\ref{NSintro})" as $\nu_o\to c_0$ is compatible with the observation mentioned in the introduction, namely that for constant odd viscosity coefficient the odd viscosity term can be absorbed into the pressure and therefore does not affect the fluid flow (if the boundary conditions do not depend on $\mu_o$).

\subsection{The stationary system}

We next consider the stationary system (\ref{odd:stat}) on a bounded simply connected $C^{1,1}$ domain $\Omega$ in $\R^2$. We fix an external force $f\in (H^{-1}(\Omega))^2$, and the boundary value $g \in (H^{\frac{1}{2}}(\partial\Omega))^2$. In order for the boundary value condition $u|_\Omega=g$ to be compatible with the divergence-free condition for $u$, we need to postulate the zero flux condition
\begin{align}\label{zeroflux}
    \int_{\partial\Omega} g\cdot n\,d\sigma =0 ,
\end{align}
where $n=(n_1,n_2)$ denotes the outer normal vector to the boundary $\partial \Omega$. 

Firstly, we give the definition of a weak solution to the boundary value problem of the stationary Navier-Stokes equations (\ref{odd:stat}).

\begin{definition}[Weak solutions of the stationary system]\label{oddstat:weak:def}
We call a pair $(\rho,u)\in L^\infty(\Omega;[0,\infty))\times (H^1(\Omega))^2$ a weak solution to (\ref{odd:stat}) with boundary value $g\in (H^{\frac{1}{2}}(\partial\Omega))^2$ and external force $f\in (H^{-1}(\Omega))^2$, if 
\begin{align}
    &\div u=0, \quad \div(\rho u)=0, \quad \text{ in the sense of distribution}, \label{div:stat}\\
    &u|_{\partial\Omega} = g \quad \text{ in the trace sense}, \label{trace:stat}
\end{align}
and the integral identity
\begin{align}\label{uweak:stat}
\begin{split}
    &\int_\Omega \Bigl(\frac{\mu_e}{2}(\nabla u+\nabla^Tu) + \frac{\mu_o}{2}(\nabla u^\perp + \nabla^\perp u)\Bigr):(\nabla \varphi + \nabla^T\varphi)\dx \\
    &= \int_\Omega \rho(u\otimes u): \nabla\varphi \dx + \langle f, \varphi \rangle_{H^{-1}(\Omega)\times H_0^1(\Omega)}
\end{split}
\end{align}
holds for any $\varphi\in (H_0^1(\Omega))^2$ with $\div \varphi =0$.
\end{definition}

The existence and regularity result of weak solutions to the boundary value problem (\ref{odd:stat}) reads as follows.

\begin{theorem}[Existence and regularity of weak solutions of the stationary system]\label{main:stat}
Let $\Omega\subset\R^2$ be a bounded simply connected $C^{1,1}$ domain, and let the external force $f\in (H^{-1}(\Omega))^2$, and the boundary value $g \in (H^{\frac{1}{2}}(\partial\Omega))^2$ be given and satisfy the zero-flux condition (\ref{zeroflux}). 
\begin{enumerate}
    \item There exists at least one weak solution $(\rho,u)\in L^\infty(\Omega;[0,\infty))\times (H^1(\Omega))^2$ of the stationary Navier-Stokes equations (\ref{odd:stat}) which is of Frolov's form.
    \item Let $k\in\N$ and let $\Omega$ be a bounded simply connected $C^{k+1,1}$ domain. If $\nu_e\in C^k(\R;[\mu_\ast,\mu^\ast])$, $\nu_o\in C^k(\R;[-\mu^\ast, \mu^\ast])$, $\eta\in C^k(\R;[0,\rho^\ast])$, and the external force $f\in (H^{k-1}(\Omega))^2$ and the boundary value $g\in (H^{k+\frac{1}{2}}(\partial \Omega))^2$, then the weak solution $(\rho,u)$ from (1) satisfies $\rho\in H^{k}(\Omega)$ and $u\in (H^{k+1}(\Omega))^2$.
\end{enumerate}
\end{theorem}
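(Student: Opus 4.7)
For part (1), I would follow the stream-function strategy of \cite{he2020solvability} adapted to include odd viscosity. Since $\Omega$ is simply connected and $\div u = 0$ with flux-compatible boundary data \eqref{zeroflux}, introduce a stream function $\phi$ with $u = \nabla^\perp \phi$, and impose the Frolov ansatz $\rho = \eta(\phi)$ for a continuous $\eta \colon \R \to [0,\rho^\ast]$. Both conditions in \eqref{div:stat} are then automatic, and the trace condition \eqref{trace:stat} translates into prescribed Dirichlet data for $\phi$ and its normal derivative on $\partial\Omega$, both well-defined thanks to \eqref{zeroflux}. Applying the curl $\nabla^\perp\cdot$ to the momentum equation eliminates the pressure and yields a fourth-order scalar equation
\[
\mathcal{L}\phi = -\nabla^\perp\cdot f + \nabla^\perp \cdot \div\bigl(\eta(\phi)\,\nabla^\perp\phi \otimes \nabla^\perp\phi\bigr),
\]
where the leading part of $\mathcal{L}$ is generated by the even viscosity term. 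A direct computation shows that when $\mu_o$ is constant the odd contribution $\nabla^\perp\cdot\div(\mu_o(\nabla u^\perp + \nabla^\perp u))$ vanishes identically on $u=\nabla^\perp\phi$; consequently, in the variable case $\mu_o=\nu_o(\eta(\phi))$ the odd viscosity enters $\mathcal{L}$ only through strictly lower-order terms in which derivatives of $\nu_o(\eta(\phi))$ multiply second or third derivatives of $\phi$. In particular the principal symbol of $\mathcal{L}$ coincides with the one in \cite{he2020solvability} and is uniformly elliptic since $\nu_e \geq \mu_\ast >0$.

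Existence of a weak $\phi\in H^2(\Omega)$ then follows by the same Leray--Schauder fixed-point argument as in \cite{he2020solvability}: after lifting the inhomogeneous boundary data, freeze the coefficients $\nu_e(\eta(\tilde\phi))$ and $\nu_o(\eta(\tilde\phi))$ and the quadratic nonlinearity at some $\tilde\phi\in H^2(\Omega)$, solve the resulting linear fourth-order elliptic problem for $\phi$ by standard linear theory, and seek a fixed point. The 2D embedding $H^2\hookrightarrow C(\bar\Omega)$ and the compact embedding $H^2\hookrightarrow H^1$ deliver the continuity and compactness of the iteration map. A uniform a priori $H^2$ bound is obtained by testing $\mathcal{L}\phi$ against $\phi$ minus its boundary lift: the $\nu_e$-term is coercive thanks to $\nu_e\geq\mu_\ast$, the convective term does not contribute at leading order because of the divergence-free structure of $\nabla^\perp\phi$, and the odd-viscosity correction is absorbed by Young's inequality since it is of lower differential order than the leading $\nu_e$-quadratic form. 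Setting $u=\nabla^\perp\phi$ and $\rho = \eta(\phi)$ then yields the claimed weak solution, which is of Frolov's form by construction.

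For part (2) I would bootstrap elliptic regularity on the fourth-order equation. For $\phi\in H^2(\Omega)$, the embedding $H^2\hookrightarrow C(\bar\Omega)$ makes $\phi$ bounded and continuous, and the chain rule combined with Hölder's inequality and Sobolev embeddings shows that if $\nu_e,\nu_o,\eta\in C^k$ with $k\geq 1$, the composed coefficients inherit $H^k$ regularity from $\phi$. With $\Omega\in C^{k+1,1}$, $f\in H^{k-1}(\Omega)$ and $g\in H^{k+1/2}(\partial\Omega)$, standard regularity for uniformly elliptic fourth-order operators in Dirichlet form then promotes $\phi$ successively from $H^2$ to $H^{k+2}(\Omega)$, re-evaluating the coefficients at the already improved $\phi$ at each step. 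This gives $u=\nabla^\perp\phi\in (H^{k+1}(\Omega))^2$ and $\rho = \eta(\phi)\in H^k(\Omega)$. The main obstacle throughout is the odd-viscosity correction to $\mathcal{L}$: one must verify, by an explicit computation of $\nabla^\perp\cdot\div(\nu_o(\eta(\phi))(\nabla u^\perp + \nabla^\perp u))$, the cancellation of the top-order terms containing fourth derivatives of $\phi$ --- the stream-function analogue of \eqref{cancellation} --- since without it the contribution of $\mu_o$ could spoil coercivity of the principal part.
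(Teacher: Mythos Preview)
Your overall strategy matches the paper's: reduce to a fourth-order elliptic problem for the stream function via Frolov's ansatz, verify ellipticity, apply the fixed-point scheme of \cite{he2020solvability}, and then bootstrap for regularity in part (2).

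The one genuine gap is in your handling of the odd-viscosity term in the a~priori $H^2$ bound for part (1). You claim that $A_{\mu_o}$ ``enters $\mathcal{L}$ only through strictly lower-order terms in which derivatives of $\nu_o(\eta(\phi))$ multiply second or third derivatives of $\phi$'' and then absorb it by Young's inequality. But under the hypotheses of part (1), $\nu_o$ is merely continuous and $\eta$ merely bounded, so $\mu_o=\nu_o(\eta(\phi))$ is only in $L^\infty$ and cannot be differentiated---the non-divergence ``lower-order'' expansion you describe is simply not available. In the weak formulation the odd contribution is the genuinely top-order bilinear form
\[
b_o(\phi,\psi)=\int_\Omega \mu_o\bigl[(2\partial_{12}\phi)(\partial_{22}\psi-\partial_{11}\psi)-(\partial_{22}\phi-\partial_{11}\phi)(2\partial_{12}\psi)\bigr]\,dx,
\]
which pairs second derivatives of $\phi$ against second derivatives of $\psi$ and therefore cannot be absorbed into the $\mu_e$-coercive part by Young alone. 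The paper instead keeps $A_{\mu_o}$ in divergence form $\sum_{|\alpha|=|\beta|=2}D^\alpha(a^o_{\alpha\beta}D^\beta)$ and checks that $\sum a^o_{\alpha\beta}\xi_\alpha\xi_\beta=0$; equivalently, $b_o$ is \emph{skew-symmetric}, so $b_o(\phi,\phi)=0$ and the odd term drops out identically when tested against $\phi$ itself, leaving only harmless cross-terms with the boundary lift. This is exactly the stream-function analogue of \eqref{cancellation} you invoke at the end, but its role is not to lower the order of $A_{\mu_o}$---it is to make $A_{\mu_o}$ invisible in the coercivity estimate without any absorption argument. With this correction your proof coincides with the paper's.
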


We follow the arguments of \cite{he2020solvability} to prove Theorem \ref{main:stat} in Section \ref{stationary}. Since the presence of odd viscosity in the equation does not change much in the proof from \cite{he2020solvability}, we are only going to explain the main steps. The idea is to look for solutions which are of Frolov's form, i.e. $(\rho,u)=(\eta(\phi),\nabla^\perp\phi)$, for the stream function $\phi$ (determined by (\ref{phi:stat})), and some function $\eta\in L^\infty(\R;[0,\infty))$.

\begin{remark}\label{rem:stat}
Similarly to \cite{he2020solvability} one can show the existence of weak solutions on an exterior domain or on the whole plane $\R^2$. More precisely, if $\Omega$ is the exterior domain of a bounded simply connected $C^1$ set in $\R^2$, or $\Omega=\R^2$, then there exists a function pair $(\rho,u)\in L^\infty(\Omega;[0,\infty))\times (D^1(\Omega))^2$ satisfying (\ref{div:stat}), (\ref{trace:stat}) (if $\Omega\neq \R^2$) and (\ref{uweak:stat}). Here, the space $D^1(\Omega)$ is defined by
\begin{align*}
    D^1(\Omega) = \dot{H}^1(\Omega) \cap \Bigl( \cap_{n\in\N} H^1(\Omega\cap B_n)\Bigr) .
\end{align*}
\end{remark}

Moreover, one can prove a statement analogous to Corollary \ref{rem:ev:conv}.

\begin{corollary}\label{rem:stat:conv}
Suppose the sequence $(\nu_o^\epsilon)_{\epsilon\in (0,1)}$ is given as in Corollary \ref{rem:ev:conv}, and $(\rho^\epsilon,u^\epsilon)$ is any weak solution of (\ref{odd:stat}) of Frolov's form, i.e., $(\rho^\epsilon,u^\epsilon) = (\eta^\epsilon(\phi^\epsilon), \nabla^\perp\phi^\epsilon)$ for some function $\phi^\epsilon \in H^2(\Omega)$, and $\eta^\epsilon \in L^\infty(\R;[0,\rho^\ast])$. Then there exists a function pair $(\rho,u)$ such that up to a subsequence
\begin{align*}
    &\rho^\epsilon \rightharpoonup \rho, \quad \text{in } L^p(\Omega), \quad \forall p\in (1,\infty), \\
    &u^\epsilon \rightharpoonup u, \quad \text{in } H^1(\Omega),
\end{align*}
as $\epsilon\to 0$, and $(\rho,u)$ is a weak solution of the stationary system (\ref{odd:stat}) with $\mu_o\equiv 0$:
\begin{align}\label{NS:stat}
    \left\{ \begin{array}{l}
    \div(\rho u\otimes u) - \div(\mu_e(\nabla u + \nabla^Tu)) + \nabla \pi = f,\\
    \div u=0,\; \div(\rho u) =0 , \\
    u|_{\partial\Omega} = g,
    \end{array}\right.
\end{align}
i.e. $(\rho,u)$ satisfies the conditions in Definition \ref{oddstat:weak:def} with $\mu_o\equiv 0$.
\end{corollary}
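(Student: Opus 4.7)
The plan is to treat $(\rho^\epsilon, u^\epsilon)_{\epsilon \in (0,1)}$ as an approximating family and mimic the stationary existence argument of Theorem~\ref{main:stat} (the adaptation of \cite{he2020solvability} to the odd-viscosity setting): derive uniform-in-$\epsilon$ bounds, extract a weakly convergent subsequence, and pass to the limit in \eqref{uweak:stat}. Testing \eqref{uweak:stat} for $(\rho^\epsilon, u^\epsilon)$ with $\varphi = u^\epsilon - \tilde g$, where $\tilde g \in (H^1(\Omega))^2$ is a fixed divergence-free extension of $g$ (available by \eqref{zeroflux}), the cancellation \eqref{cancellation} eliminates the quadratic-in-$u^\epsilon$ part of the odd-viscosity contribution, and standard Young and Sobolev manipulations give $\|\nabla u^\epsilon\|_{L^2(\Omega)} \le C$ uniformly in $\epsilon$. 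Meanwhile $0 \le \rho^\epsilon \le \rho^\ast$ a.e., and elliptic regularity for $-\Delta \phi^\epsilon = \nabla^\perp \cdot u^\epsilon \in L^2(\Omega)$ (with the trace of $\phi^\epsilon$ prescribed through $g$) yields $\|\phi^\epsilon\|_{H^2(\Omega)} \le C$ uniformly. Passing to a subsequence, one then has $\rho^\epsilon \rightharpoonup^\ast \rho$ in $L^\infty(\Omega)$, $u^\epsilon \rightharpoonup u$ in $(H^1(\Omega))^2$, and $\phi^\epsilon \to \phi$ in $H^1(\Omega) \cap C(\bar\Omega)$ by Rellich and the embedding $H^2 \hookrightarrow C(\bar\Omega)$ in two dimensions; strong convergence of $u^\epsilon$ in $(L^p(\Omega))^2$ for every $p < \infty$ follows from compactness of $H^1 \hookrightarrow L^p$, and $u|_{\partial\Omega} = g$ persists by continuity of the trace.

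Most terms in \eqref{uweak:stat} then pass to the limit routinely. The convective term behaves well because $u^\epsilon \otimes u^\epsilon \to u \otimes u$ strongly in $(L^2(\Omega))^{2\times 2}$ (via $H^1 \hookrightarrow L^4$ in 2D) pairs against $\rho^\epsilon \rightharpoonup^\ast \rho$ in $L^\infty$ and the fixed $\nabla\varphi \in L^2$. The divergence identities \eqref{div:stat} transfer to the limit in the sense of distributions. For the odd-viscosity term I would split $\nu_o^\epsilon(\rho^\epsilon) = c_0 + (\nu_o^\epsilon(\rho^\epsilon) - c_0)$: the second piece is $o(1)$ in $L^\infty(\Omega)$ by \eqref{nuo:limit:cond} and multiplies a uniformly $L^2$-bounded tensor, while the constant piece vanishes identically. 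Indeed, the symmetry of the matrix $\nabla u^{\epsilon\perp} + \nabla^\perp u^\epsilon$, together with $\div(\nabla^\perp u^\epsilon) = 0$ and $\div(\nabla u^{\epsilon\perp}) = \Delta u^{\epsilon\perp} = -\nabla\Delta\phi^\epsilon$ (using $u^\epsilon = \nabla^\perp \phi^\epsilon$ and $\phi^\epsilon \in H^2$), yield after integration by parts
\[
c_0 \int_\Omega (\nabla u^{\epsilon\perp} + \nabla^\perp u^\epsilon) : (\nabla\varphi + \nabla^T\varphi)\, dx = -2 c_0 \int_\Omega \Delta\phi^\epsilon \, \div\varphi\, dx = 0
\]
for every divergence-free $\varphi \in (H^1_0(\Omega))^2$. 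Thus the odd-viscosity contribution drops out in the limit, in accordance with the observation that a constant odd-viscosity coefficient can be absorbed into the pressure.

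The main obstacle is the shear-viscosity term $\int_\Omega \tfrac{1}{2}\nu_e(\rho^\epsilon)(\nabla u^\epsilon + \nabla^T u^\epsilon) : (\nabla\varphi + \nabla^T\varphi)\, dx$, which is bilinear in $\nu_e(\rho^\epsilon)$ and $\nabla u^\epsilon$, both converging only weakly; without further compactness one cannot directly identify the limit. To handle it I would invoke the compactness machinery underlying the proof of Theorem~\ref{main:stat}, i.e.\ the arguments of \cite{he2020solvability}: the Frolov form $\rho^\epsilon = \eta^\epsilon(\phi^\epsilon)$, the uniform convergence $\phi^\epsilon \to \phi$ on $\bar\Omega$, and the fourth-order elliptic equation for $\phi^\epsilon$ together allow one to upgrade the weak $L^p$ convergence of $\rho^\epsilon$ to strong $L^p(\Omega)$-convergence of $\nu_e(\rho^\epsilon)$ for every $p < \infty$. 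Combined with $\nabla u^\epsilon \rightharpoonup \nabla u$ in $L^2(\Omega)$, this delivers the missing passage to the limit and confirms that $(\rho,u)$ satisfies the weak formulation of \eqref{NS:stat}.
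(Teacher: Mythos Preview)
Your overall strategy parallels the paper's, but there is a genuine gap in your a~priori estimate. When you test \eqref{uweak:stat} with $\varphi = u^\epsilon - \tilde g$, the convective contribution $\int_\Omega \rho^\epsilon (u^\epsilon \otimes u^\epsilon) : \nabla\tilde g\,dx$ survives (the piece containing $\nabla u^\epsilon$ does vanish thanks to $\div(\rho^\epsilon u^\epsilon)=0$), and this surviving term is \emph{quadratic} in $u^\epsilon$: in two dimensions it is controlled by $C\rho^\ast\|u^\epsilon\|_{L^4}^2\|\nabla\tilde g\|_{L^2}\le C\rho^\ast\|u^\epsilon\|_{H^1}^2\|\nabla\tilde g\|_{L^2}$, which competes on equal footing with the coercive term $\tfrac{\mu_\ast}{2}\|\nabla u^\epsilon+\nabla^Tu^\epsilon\|_{L^2}^2$. ``Standard Young and Sobolev manipulations'' therefore do \emph{not} close unless the data are small or one invokes a Hopf-type extension concentrated near $\partial\Omega$ so that the convective term carries an arbitrarily small constant---and your phrase ``a fixed divergence-free extension'' indicates you have not done the latter. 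The paper handles exactly this difficulty by a different route: it works at the level of the stream function, tests the fourth-order weak formulation \eqref{phiweak} with $\phi^\epsilon-\phi_0^\delta\in H^2_0(\Omega)$ (where $\phi_0^\delta$ is a cut-off extension of the boundary data), and runs the Leray-type \emph{contradiction} argument from \cite{he2020solvability} to conclude $\sup_\epsilon\|\phi^\epsilon\|_{H^2(\Omega)}<\infty$. Your observation that an $H^1$-bound on $u^\epsilon$ would deliver the $H^2$-bound on $\phi^\epsilon$ via elliptic regularity for $\Delta\phi^\epsilon=\nabla^\perp\!\cdot u^\epsilon$ is correct, but the nontrivial content is precisely that uniform $H^1$-bound, and this is where your argument is incomplete.

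Your treatment of the odd-viscosity term is correct and coincides with the paper's use of the cancellation \eqref{uvarphi:cancel}: the constant part $c_0$ drops out of the weak formulation against divergence-free test functions, and the remainder $\nu_o^\epsilon(\rho^\epsilon)-c_0$ is $o(1)$ in $L^\infty$ and pairs against a uniformly $L^2$-bounded tensor. For the shear-viscosity term the paper is as terse as you are (``the claim follows by compactness arguments''); note, however, that your proposed upgrade of $\nu_e(\rho^\epsilon)$ to strong $L^p$-convergence via the Frolov structure $\rho^\epsilon=\eta^\epsilon(\phi^\epsilon)$ and the uniform convergence $\phi^\epsilon\to\phi$ is delicate here, because the profiles $\eta^\epsilon$ are allowed to vary with $\epsilon$ and carry no convergence hypothesis of their own.
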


The main step of the proof is to show that the sequence of stream functions $(\phi^\epsilon)_{\epsilon \in (0,1)}$ is bounded in $H^2(\Omega)$, by adjusting the arguments from \cite[Section 2.1]{he2020solvability}. This then yields the uniform boundedness of $(u^\epsilon)_{\epsilon\in (0,1)}$ in $H^1(\Omega)$, from which the assertion follows by standard compactness arguments.

\subsection{Notation} Throughout this paper we denote by $C$ a positive constant which may vary from time to time. For $v\in\R^2$ and a vector field $u\in (W^{1,1}_{\loc})^2$ we write $v\cdot\nabla u := (v\cdot\nabla)u= v_1\partial_1u+v_2\partial_2u$. For $p\in [1,\infty]$, $T>0$ and a Banach space $X$ we write $L^p_T(X) := L^p(0,T;X)$ and $L^p(X) := L^p(0,\infty;X)$, and we write $L^p$ or $H^m$ instead of $L^p(\Omega)$ or $H^m(\Omega)$, respectively, if the set $\Omega$ is clear from the context. The set $B_R=B_R(0)$ denotes the ball in $\R^2$ of radius $R>0$ centred at the origin.


\section{Proof of Theorem \ref{main}}\label{odd:proof}

In this section we prove the existence of weak solutions to the evolutionary system. We begin by regularising the given data in Subsection \ref{reg:data}. Afterwards we prove the existence and weak convergence of a sequence of solutions to the regularised system. To do so, we treat the Dirichlet and periodic cases (Subsection \ref{dirper}) separately from the whole plane case (Subsection \ref{whole}).

\subsection{Regularisation of the data}\label{reg:data}

In this subsection we regularise the initial data and external force as in \cite{lions1996mathematical}.

Let $\alpha\in C_c^\infty(\R^2)$ and $\beta\in C_c^\infty(\R)$ be smooth, compactly supported functions satisfying
\begin{align*}
    &\supp \alpha \subset B_1(0)\subset\R^2, \quad 0\leq\alpha \leq 1, \quad \int_{\R^2} \alpha \dx=1, \\
    &\supp \beta \subset (-1,1)\subset\R, \quad 0\leq\beta\leq 1, \quad \int_\R \beta \dt=1.
\end{align*}
Let $\epsilon\in (0,1)$. We set $\alpha_\epsilon= \epsilon^{-2} \alpha(\epsilon^{-1}\cdot)$, $\beta_\epsilon = \epsilon^{-1} \beta (\epsilon^{-1}\cdot)$. 
We choose approximate functions $\nu_e^\epsilon, \nu_o^\epsilon\in C^\infty ([0,\infty))$ with $\nu_e^\epsilon\geq\frac{\mu_\ast}{2}$, and 
\begin{align*}
    \Vert \nu_i^\epsilon - \nu_i \Vert_{C_b([\rho_\ast, \rho^\ast])} \leq \epsilon, \quad i\in\{e,o\} .
\end{align*}
Furthermore, in the Dirichlet case we set $\Omega_\delta=\{x\in\Omega : \dist(x,\partial\Omega)> \delta\}$, $\delta>0$. Then we define 
\begin{align*}
    &f^\epsilon= (1_{[\epsilon, \infty)} (f1_{\Omega_{2\epsilon}})\ast_x \alpha_\epsilon)\ast_t \beta_\epsilon, \quad \rho_0^\epsilon=(\rho_0 1_\Omega + 1_{\R^2 \setminus \Omega})\ast_x\alpha_\epsilon, \quad  \text{in the Dirichlet case,} \\
    &f^\epsilon= (1_{[\epsilon, \infty)} f\ast_x \alpha_\epsilon) \ast_t \beta_\epsilon, \quad \rho_0^\epsilon= \rho_0\ast_x\alpha_\epsilon, \quad \text{in the periodic and whole plane case,}
\end{align*}
and $u_0^\epsilon$ in the following way: Let $\overline{m}_0^\epsilon = \rho^\epsilon_0 ((u_01_{\Omega_{2\epsilon}}) \ast\alpha_\epsilon)$, and let $q_0^\epsilon\in C^\infty(\Omega)$ be a solution of
\begin{align*}
    &\div(\frac{1}{\rho_0^\epsilon}(\nabla q_0^\epsilon - \overline{m}_0^\epsilon))=0, \\
    &\frac{\partial q_0^\epsilon}{\partial n} =0 \quad \text{on } \partial \Omega, \quad \text{in the Dirichlet and periodic case,} \\
    &q_0^\epsilon \in H^1(\R^2), \quad \text{in the whole plane case,}
\end{align*}
where $n$ denotes the unit outward normal to $\partial\Omega$. Then we have $\overline{m}_0^\epsilon = \rho_0^\epsilon \overline{u}_0^\epsilon + \nabla q_0^\epsilon$ for some $\overline{u}_0^\epsilon\in C^\infty(\Omega)$ satisfying
\begin{align*}
    &\div \overline{u}_0^\epsilon =0, \\
    &\overline{u}_0^\epsilon\cdot n=0, \quad \text{on } \partial \Omega, \quad \text{in the Dirichlet and periodic case,} \\
    &\overline{u}_0^\epsilon \in L^2(\R^2), \quad \text{in the whole plane case.}
\end{align*}
Finally we choose
\begin{align*}
    u_0^\epsilon\in C_c^\infty(\Omega), \quad \div u_0^\epsilon =0, \quad \Vert u_0^\epsilon - \overline{u}_0^\epsilon \Vert_{L^2(\Omega)} \leq \epsilon.
\end{align*}
With the above definitions there holds
\begin{align*}
    &f^\epsilon\in C^\infty([0,\infty)\times\Omega), \quad \rho_0^\epsilon \in C^\infty(\Omega), \quad u_0^\epsilon \in C^\infty_c(\Omega), \\
    &f^\epsilon\to f, \quad \text{ in } (L^2((0,T)\times \Omega))^2, \quad \forall T>0, \\
    &\rho_0^\epsilon\to \rho_0, \quad \text{ in } L^p(\Omega\cap B_R),\quad \forall p\in [1,\infty), \, R>0, \\
    &m_0^\epsilon := \overline{m}_0^\epsilon + \rho^\epsilon_0 (u_0^\epsilon - \overline{u}_0^\epsilon) \to m_0 := \rho_0 u_0, \quad \text{in } L^2(\Omega),
\end{align*}
as $\epsilon\to 0$.

Since we use a fixed point argument to prove the existence of smooth solutions to the regularised system, we recall here the Schauder fixed point theorem (see e.g. \cite{gilbarg1977elliptic}). 
\begin{theorem}[Schauder fixed point theorem]\label{schauder:fp}
    Let $Y$ be a closed, convex set in a Banach space $X$, and let $F$ be a continuous mapping of $Y$ into itself such that $F(Y)$ is relatively compact. Then $F$ has a fixed point, i.e., there exists an element $x\in Y$ such that $F(x)=x$.
\end{theorem}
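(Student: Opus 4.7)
The plan is to reduce the problem to a finite-dimensional one by approximating $F$ via Schauder projections onto finite $\varepsilon$-nets, applying the Brouwer fixed point theorem in each approximation, and then passing to the limit using the relative compactness of $F(Y)$ together with the continuity of $F$.

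More concretely, for each integer $n\geq 1$ I would first use that $\overline{F(Y)}$ is compact to select a finite $\tfrac{1}{n}$-net $\{x_1^n,\ldots,x_{k_n}^n\}\subset F(Y)$, so that every point of $\overline{F(Y)}$ lies within distance $\tfrac{1}{n}$ of some $x_i^n$. Then I would define the Schauder projection
\[
P_n(z) = \frac{\sum_{i=1}^{k_n} m_i^n(z)\, x_i^n}{\sum_{i=1}^{k_n} m_i^n(z)}, \qquad m_i^n(z) = \max\Bigl(0,\ \tfrac{1}{n}-\Vert z-x_i^n\Vert_X\Bigr),
\]
which is continuous on $\overline{F(Y)}$ (the denominator is strictly positive there by the net property), takes values in the simplex $\Delta_n := \mathrm{conv}\{x_1^n,\ldots,x_{k_n}^n\}\subset Y$ (by convexity of $Y$ together with $F(Y)\subset Y$), and satisfies the key estimate $\Vert P_n(z)-z\Vert_X < \tfrac{1}{n}$ for all $z\in \overline{F(Y)}$.

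The restricted map $G_n := P_n\circ F\,|_{\Delta_n}\colon \Delta_n\to\Delta_n$ is then a continuous self-map of a nonempty, compact, convex subset of a finite-dimensional subspace of $X$. Brouwer's fixed point theorem provides $y_n\in\Delta_n$ with $G_n(y_n)=y_n$, whence $\Vert y_n-F(y_n)\Vert_X=\Vert P_n(F(y_n))-F(y_n)\Vert_X<\tfrac{1}{n}$. Since $F(y_n)\in F(Y)$ and $\overline{F(Y)}$ is compact, one may extract a subsequence $F(y_{n_k})\to z\in\overline{F(Y)}\subset\overline{Y}=Y$; the closeness estimate then forces $y_{n_k}\to z$ as well, and continuity of $F$ yields $F(y_{n_k})\to F(z)$, so that $F(z)=z$.

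The main technical obstacle is the construction and verification of the Schauder projection: one must check that the denominator of $P_n$ is bounded away from zero on $\overline{F(Y)}$ (which is precisely where the $\tfrac{1}{n}$-net enters) and that the convexity and closedness of $Y$ guarantee both $P_n(z)\in Y$ and that the limit $z$ still lies in $Y$. Once $P_n$ is in hand, the remainder of the argument is routine compactness plus continuity, and the only use of the infinite-dimensional structure of $X$ is through the relative compactness of $F(Y)$, which is exactly what allows the reduction to the finite-dimensional Brouwer theorem.
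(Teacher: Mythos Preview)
Your argument is correct and is the standard Schauder-projection reduction to Brouwer's theorem. Note, however, that the paper does not actually prove this statement: Theorem~\ref{schauder:fp} is merely quoted as a classical tool, with a reference to \cite{gilbarg1977elliptic}, and is then applied in Lemma~\ref{ex:ne}. So there is nothing in the paper to compare your proof against; you have supplied a full proof where the paper gives none.
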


In the following we distinguish between the case that the domain $\Omega$ is bounded (Dirichlet and periodic case) and the case that $\Omega$ is unbounded (whole plane case).


\subsection{Dirichlet and Periodic case}\label{dirper}

We use Faedo-Galerkin approximations to construct a sequence of smooth solutions $(\rho^{\epsilon,n},u^{\epsilon,n})_{\epsilon\in (0,1), n\in\N}$ as in \cite{feireisl2004dynamics}. We start by proving their existence using the Schauder fixed point theorem. 

\subsubsection{Existence of regularised solutions}\label{loc:dp}

Let the set of divergence-free test functions on $\Omega$ be denoted by
\begin{align*}
    \mathcal{D}_\sigma&:= \{\varphi\in(C_c^\infty(\Omega))^2 : \div\varphi=0\}, \quad \text{ or} \\
    &:= \{\varphi\in (C^\infty(\Omega))^2 : \div\varphi =0, \, \text{ $\varphi$ is periodic in $x_i$ of period $L_i$, $i=1,2$}\},
\end{align*}
in the Dirichlet or periodic case, respectively. Furthermore, let
\begin{align*}
    L^2_\sigma &:= \{\varphi\in (L^2(\Omega))^2 : \div\varphi=0\}, \quad \text{ or} \\
    &:= \{\varphi\in (L^2(\Omega))^2 : \div\varphi=0, \, \text{ $\varphi$ is periodic in $x_i$ of period $L_i$, $i=1,2$}\}, \\
    V_\sigma &:= \{\varphi\in (H_0^1(\Omega))^2 : \div\varphi=0\}, \quad \text{ or} \\
    &:= \{\varphi\in (H^1(\Omega))^2 : \div\varphi=0, \, \text{ $\varphi$ is periodic in $x_i$ of period $L_i$, $i=1,2$}\},
\end{align*}
in the Dirichlet or periodic case, respectively. Since $\mathcal{D}_\sigma$ is dense in the Hilbert spaces ($L^2_\sigma$, $\Vert\cdot \Vert_{L^2}$) and ($V_\sigma$, $\Vert\cdot \Vert_{H^1}$), we can choose a countable system of functions $\{\varphi_n\}_{n\in\N} \subset \mathcal{D}_\sigma$ which form an orthonormal basis in $L^2_\sigma$ and are dense in $V_\sigma$. For $n\in\N$ let
\begin{align*}
    X_n := \mathrm{span}\{\varphi_1,...,\varphi_n \}.
\end{align*}
It is easy to see that $\overline{X_n}^{\Vert\cdot\Vert_{L^2}} = X_n$, so that $(X_n,\Vert\cdot\Vert_{X_n})$ is a Hilbert space endowed with the norm $\Vert\cdot \Vert_{X_n}:= \Vert\cdot \Vert_{L^2}$.

Notice that for $p\in [1,\infty]$ there exists a constant $C=C(n,\{\Vert\varphi_j\Vert_{W^{1,p}}\}_{j=1,...,n})$ such that
\begin{align}\label{xn:norm}
    \Vert\varphi\Vert_{W^{1,p}} \leq C \Vert\varphi \Vert_{X_n}, \quad \forall \varphi\in X_n .
\end{align}

We fix $T>0$ and $n\in\N$. Our first goal is to find for each $\epsilon \in (0,1)$ a function pair $(\rho^{\epsilon,n}, u^{\epsilon,n})\in C([0,T]\times\Omega)\times C([0,T];X_n)$ which satisfies (\ref{rhoweak}) and (\ref{uweak}) with test-functions $\phi\in C_c^\infty([0,T)\times\Omega)$ and $\varphi\in C^\infty_c([0,T);X_n)$, respectively, and initial data replaced by $\rho_0^\epsilon$, $u_0^{\epsilon}$, $\nu_i^\epsilon$, $i\in \{e,o\}$, and $f^\epsilon$.

Let us first introduce some notation. For a function $u\in C([0,T];X_n)$ we denote by $\rho[u]$ the unique solution of the transport equation
\begin{align*}
    \left\{ \begin{array}{l}
         \partial_t\rho + u\cdot\nabla\rho =0, \quad \text{in } (0,T)\times\Omega, \\
         \rho(0,\cdot) = \rho_0^\epsilon .
    \end{array}\right.
\end{align*}
By the smoothness of $u$ in space we know that $\rho[u],\partial_t \rho[u]\in C([0,T];C^k(\Omega))$ for any $k\in\N_0$, with
\begin{align*}
    &\Vert\rho[u](t)\Vert_{L^p} \leq \Vert\rho_0^\epsilon \Vert_{L^p}\exp\Bigl(C\int_0^t \Vert \nabla u(t') \Vert_{L^\infty} dt' \Bigr), \\
    &\Vert\rho[u](t)\Vert_{L^\infty} \leq \Vert\rho_0^\epsilon \Vert_{L^\infty} ,
\end{align*}
for $t\in [0,\infty)$ and $p\in [1,\infty)$. Clearly, $\rho[u]$ satisfies the weak formulation (\ref{rhoweak}) with velocity field $u$ and initial data $\rho_0^\epsilon$.

In order to reformulate equation (\ref{uweak}) as a fixed point problem we define for a function $\rho:\Omega\to [\rho_\ast,\rho^\ast]$ the operator
\begin{align*}
    M_\rho : X_n\to X_n^\ast, \quad M_\rho(\varphi) = \Bigl( X_n\ni \tilde{\varphi} \mapsto \int_\Omega \rho\varphi\cdot \tilde{\varphi}\dx\Bigr)
\end{align*}
where $X_n^\ast$ denotes the dual space of $X_n$. $M_\rho$ is linear, continuous and bijective with inverse given by
\begin{align*}
    M_\rho^{-1} : X_n^\ast \to X_n, \quad M_\rho^{-1} \varphi^\ast = \sum_{j=1}^n \varphi^\ast(\frac{1}{\rho}\varphi_j)\varphi_j ,
\end{align*}
and its operator norm satisfies the bound $\Vert M_\rho^{-1}\Vert_{X_n^\ast\to X_n} \leq \frac{n}{\rho_\ast}$.

\begin{lemma}\label{ex:ne}
For $\epsilon\in (0,1)$ and sufficiently large $n\in\N$ there exists $u^{\epsilon,n}\in C([0,T];X_n)\cap C^1((0,T];X_n)$ such that 
\begin{align}\label{uweak:ne:var}
\begin{split}
    &\int_\Omega \rho^{\epsilon,n}(t)u^{\epsilon,n}(t)\cdot\varphi(t) \dx - \int_\Omega \rho^\epsilon_0 u_0^{\epsilon}\cdot\varphi(0) \dx - \int_0^t\int_\Omega \rho^{\epsilon,n}u^{\epsilon,n}\cdot \partial_t \varphi \dx dt' \\
    &=\int_0^t \int_{\Omega} \rho^{\epsilon,n}(u^{\epsilon,n}\otimes u^{\epsilon,n}): \nabla\varphi - \frac{\mu_e^{\epsilon,n}}{2}(\nabla u^{\epsilon,n} + \nabla^T u^{\epsilon,n}):(\nabla \varphi + \nabla^T \varphi) \\
    &- \frac{\mu_o^{\epsilon,n}}{2}(\nabla u^{\epsilon,n \perp} + \nabla^\perp u^{\epsilon,n}):(\nabla \varphi + \nabla^T \varphi) + \rho^{\epsilon,n} f^\epsilon\cdot\varphi \dx dt'
\end{split}
\end{align}
for any function $\varphi \in C^1([0,T];X_n)$ and time $t\in[0,T]$, where $\rho^{\epsilon,n} = \rho[u^{\epsilon,n}]$, and $\mu_i^{\epsilon,n} = \nu_i^\epsilon(\rho^{\epsilon,n})$, $i\in \{e,o\}$. Moreover, $(\rho^{\epsilon,n}, u^{\epsilon,n})$ satisfies the energy equality
\begin{align}\label{energy:eq:ne}
\begin{split}
    &\frac{1}{2}\int_\Omega \rho^{\epsilon,n}(t) \vert u^{\epsilon,n}(t)\vert^2 \dx + \int_0^t \int_\Omega \frac{\mu_e^{\epsilon,n}}{2}\vert \nabla u^{\epsilon,n} + \nabla^T u^{\epsilon,n} \vert^2 \dx dt' \\
    &= \frac{1}{2} \int_\Omega \rho^{\epsilon,n}(0) \vert u^{\epsilon,n}(0)\vert^2 \dx + \int_0^t \int_\Omega \rho^{\epsilon,n} f^\epsilon\cdot u^{\epsilon,n} \dx dt'
\end{split}
\end{align}
for any $t\in [0,T]$.
\end{lemma}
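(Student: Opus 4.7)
The approach is to apply Schauder's fixed point theorem (Theorem \ref{schauder:fp}) following the template of \cite[Section 7]{feireisl2004dynamics}, exploiting the observation that the odd viscosity term has the same differential order and integrability as the shear viscosity term and therefore does not alter the structure of the Galerkin problem. For a time $T_0\in(0,T]$ to be fixed later and $v\in C([0,T_0];X_n)$, let $\rho[v]$ denote the smooth solution of the transport equation with datum $\rho_0^\epsilon$; since (\ref{xn:norm}) makes $v(t)$ Lipschitz in $x$, the method of characteristics preserves $\rho_\ast\leq\rho[v]\leq\rho^\ast$. Introduce the functional $N[v,\rho[v]](t)\in X_n^\ast$ defined for $\varphi\in X_n$ by
\begin{align*}
\langle N[v,\rho[v]](t),\varphi\rangle &:= \int_\Omega\Bigl(\rho[v]\,(v\otimes v):\nabla\varphi - \tfrac{\nu_e^\epsilon(\rho[v])}{2}(\nabla v+\nabla^T v):(\nabla\varphi+\nabla^T\varphi) \\
&\qquad\;- \tfrac{\nu_o^\epsilon(\rho[v])}{2}(\nabla v^\perp+\nabla^\perp v):(\nabla\varphi+\nabla^T\varphi) + \rho[v]f^\epsilon\cdot\varphi\Bigr)\dx,
\end{align*}
and define the map $\mathcal{T}:C([0,T_0];X_n)\to C([0,T_0];X_n)$ by
\begin{align*}
\mathcal{T}(v)(t):=M_{\rho[v](t)}^{-1}\Bigl(\mathcal{M}_0+\int_0^t N[v,\rho[v]](s)\,ds\Bigr), \qquad \mathcal{M}_0(\varphi):=\int_\Omega\rho_0^\epsilon u_0^\epsilon\cdot\varphi\dx.
\end{align*}
Any fixed point of $\mathcal{T}$ yields a solution to (\ref{uweak:ne:var}) with $\rho^{\epsilon,n}=\rho[u^{\epsilon,n}]$.

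To apply Schauder's theorem on a short interval $[0,T_0]$, I would verify the standard hypotheses: the bound $\|M_{\rho[v]}^{-1}\|_{X_n^\ast\to X_n}\leq n/\rho_\ast$ gives well-definedness; continuity of $v\mapsto\rho[v]$ (as a map into $C([0,T_0];C^1)$) and of $\rho\mapsto M_\rho^{-1}$ yield continuity of $\mathcal{T}$; a uniform time-Hölder estimate on $\mathcal{T}(v)$ combined with $\dim X_n<\infty$ gives relative compactness via Arzelà--Ascoli; and choosing $T_0$ sufficiently small in terms of $n$, $\rho_\ast$, $\mu^\ast$, $\|f^\epsilon\|_{L^2}$ and the basis norms makes $\mathcal{T}$ a self-map of a closed ball in $C([0,T_0];X_n)$. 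Once a local solution is produced, the coefficients $c_j(t)$ in $u^{\epsilon,n}(t)=\sum_{j=1}^n c_j(t)\varphi_j$ satisfy an ODE system whose right-hand side is smooth in $c$ and continuous in $t$, which upgrades $u^{\epsilon,n}$ to $C^1((0,T_0];X_n)$.

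For the energy equality (\ref{energy:eq:ne}), I would test (\ref{uweak:ne:var}) with $\varphi=u^{\epsilon,n}$, admissible by the regularity just obtained. The transport equation for $\rho^{\epsilon,n}$ together with $\div u^{\epsilon,n}=0$ rewrites the time derivative of the kinetic energy in standard form, and the convective term vanishes after an integration by parts. Crucially, the odd viscosity contribution
\begin{align*}
\int_\Omega\tfrac{\mu_o^{\epsilon,n}}{2}(\nabla u^{\epsilon,n\perp}+\nabla^\perp u^{\epsilon,n}):(\nabla u^{\epsilon,n}+\nabla^T u^{\epsilon,n})\dx=0
\end{align*}
vanishes by the \emph{pointwise} algebraic cancellation (\ref{cancellation}), with no integration by parts and no additional regularity on $u^{\epsilon,n}$ required. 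This yields (\ref{energy:eq:ne}) and, in turn, a uniform $L^2$-bound on $u^{\epsilon,n}$, which by equivalence of norms on $X_n$ forbids blow-up of $\|u^{\epsilon,n}(t)\|_{X_n}$ and permits iteration of the local construction up to time $T$.

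The only point where the odd viscosity could in principle cause difficulty is preserving the energy identity, and this reduces entirely to the pointwise cancellation (\ref{cancellation}). All other components of the argument (the transport step, the invertibility of $M_\rho$, the self-map and compactness properties of $\mathcal{T}$, the ODE system for the coefficients, and the extension to $[0,T]$) carry over from the $\mu_o\equiv 0$ case of \cite{feireisl2004dynamics} with only cosmetic changes, since the odd viscosity term is handled termwise exactly like the shear viscosity term.
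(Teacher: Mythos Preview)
Your proposal is correct and follows essentially the same approach as the paper: both apply the Schauder fixed point theorem to the map $\mathcal{T}$ (the paper's $F$) on a small time interval, verify continuity, self-mapping, and relative compactness via Arzel\`a--Ascoli, then use the pointwise cancellation (\ref{cancellation}) to obtain the energy equality and iterate to reach time $T$. The only cosmetic difference is that the paper first reduces (\ref{uweak:ne:var}) to the equivalent formulation with time-independent test functions $\tilde\varphi\in X_n$ before setting up the fixed point, and it works explicitly with the ball $\{u:\Vert u-u_0^\epsilon\Vert_{L^\infty_T(X_n)}\leq 1\}$ centred at $u_0^\epsilon$ (which is non-empty for $n$ large, explaining the hypothesis on $n$).
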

\begin{proof}
We first reduce the problem of showing (\ref{uweak:ne:var}) to a simpler (but equivalent) problem: We claim that it suffices to look for $u^{\epsilon,n}$ satisfying
\begin{align}\label{uweak:ne}
\begin{split}
    &\int_\Omega \rho^{\epsilon,n}(t)u^{\epsilon,n}(t)\cdot \tilde{\varphi} - \rho_0^\epsilon u_0^{\epsilon}\cdot \tilde{\varphi} \dx \\
    &= \int_0^t \int_{\Omega} \rho^{\epsilon,n}(u^{\epsilon,n}\otimes u^{\epsilon,n}): \nabla\tilde{\varphi} - \frac{\mu_e^{\epsilon,n}}{2}(\nabla u^{\epsilon,n} + \nabla^T u^{\epsilon,n}):(\nabla \tilde{\varphi} + \nabla^T \tilde{\varphi}) \\
    &- \frac{\mu_o^{\epsilon,n}}{2}(\nabla u^{\epsilon,n \perp} + \nabla^\perp u^{\epsilon,n}):(\nabla \tilde{\varphi} + \nabla^T \tilde{\varphi}) + \rho^{\epsilon,n} f^\epsilon\cdot \tilde{\varphi} \dx dt'
\end{split}
\end{align}
for any $\tilde{\varphi}\in X_n$ and $t\in [0,T]$. Indeed, assume that (\ref{uweak:ne}) holds and take $\varphi \in C^1([0,T];X_n)$. Differentiating (\ref{uweak:ne}) with respect to time and then testing with $\tilde{\varphi}=\varphi(t)\in X_n$ for fixed $t\in [0,T]$ implies
\begin{align*}
    &\frac{d}{dt}\int_\Omega \rho^{\epsilon,n}u^{\epsilon,n} \cdot \varphi \dx - \int_\Omega \rho^{\epsilon,n} u^{\epsilon,n} \cdot \partial_t \varphi \dx \\
    &= \int_\Omega \rho^{\epsilon,n} (u^{\epsilon,n}\otimes u^{\epsilon, n}):\nabla\varphi - \frac{\mu_e^{\epsilon,n}}{2}(\nabla u^{\epsilon,n} + \nabla^T u^{\epsilon,n}):(\nabla \varphi + \nabla^T \varphi) \\
    &- \frac{\mu_o^{\epsilon,n}}{2}(\nabla u^{\epsilon,n \perp} + \nabla^\perp u^{\epsilon,n}):(\nabla \varphi + \nabla^T \varphi) + \rho^{\epsilon,n} f^\epsilon\cdot\varphi \dx,
\end{align*}
which yields the claimed equality (\ref{uweak:ne:var}) after integration in $t$.

In order to prove the existence of a solution to (\ref{uweak:ne}), we use the Schauder fixed point Theorem \ref{schauder:fp}. We begin by reformulating this equation into a fixed point problem. One can rewrite the equation as
\begin{align*}
    u^{\epsilon,n}(t) = M_{\rho[u^{\epsilon,n}](t)}^{-1}\Bigl((\rho_0^\epsilon u_0^{\epsilon})^\ast + \int_0^t \Bop(u^{\epsilon,n})(t')dt'\Bigr), \quad \forall t\in [0,T],
\end{align*}
where the element $(\rho_0^\epsilon u_0^{\epsilon})^\ast\in X_n^\ast$ and the operator $\Bop:C([0,T];X_n)\to L^\infty_T(X_n^\ast)$ are defined as
\begin{align*}
    (\rho_0^\epsilon u_0^{\epsilon})^\ast &= \Bigl(X_n\ni \tilde{\varphi} \mapsto \int_\Omega \rho_0^\epsilon u_0^{\epsilon} \cdot \tilde{\varphi} \dx\Bigr), \\
    \Bop(u) &= \Bigl(X_n\ni \tilde{\varphi} \mapsto \int_\Omega \rho[u] (u \otimes u) : \nabla \tilde{\varphi} - \frac{\mu_e^\epsilon}{2}(\nabla u + \nabla^T u):(\nabla\tilde{\varphi} + \nabla^T \tilde{\varphi}) \Bigr. \\
    &\Bigl.\quad- \frac{\mu_o^\epsilon}{2}(\nabla u^\perp + \nabla^\perp u):(\nabla \tilde{\varphi} + \nabla^T \tilde{\varphi}) + \rho[u] f^\epsilon\cdot \tilde{\varphi} \dx\Bigr),
\end{align*}
for $u\in C([0,T];X_n)$, with $\mu_i^\epsilon = \nu_i^\epsilon(\rho[u])$, $i\in\{e,o\}$. 
%

We define the set $E_T= \{u\in C([0,T];X_n) : \Vert u - u_0^{\epsilon} \Vert_{L^\infty_T(X_n)}\leq 1\}$, which is clearly convex in $C([0,T];X_n)$ (and non-empty for $n$ sufficiently large), and the operator
\begin{align*}
    F:\Bset_T\to C([0,T];X_n), \quad F(u)(t) := M_{\rho[u](t)}^{-1} \Bigl((\rho_0^\epsilon u_0^{\epsilon})^\ast + \int_0^t \Bop(u)(t')\, dt' \Bigr).
\end{align*}
We first show that $F$ has a fixed point provided $T=T(n)$ is sufficiently small.

\noindent\textbf{1. Continuity.} In order to verify that $F$ is continuous, let $u,v\in C([0,T];X_n)$ and $t\in [0,T]$. Writing
\begin{align*}
    F(u)(t)-F(v)(t) &= M_{\rho[u](t)}^{-1}\Bigl(\int_0^t \Bop(u)(t')-\Bop(v)(t')dt'\Bigr) \\
    &\quad+ (M_{\rho[u](t)}^{-1} - M_{\rho[v](t)}^{-1}) \Bigl((\rho_0^\epsilon u_0^{\epsilon, n})^\ast + \int_0^t \Bop(v)(t') \, dt'\Bigr),
\end{align*}
and using 
\begin{align}\label{Fcont:2}
\begin{split}
     \Vert M_{\rho[u](t)}^{-1} - M_{\rho[v](t)}^{-1} \Vert_{X_n^\ast\to X_n} 
     &= \Vert M_{\rho[v](t)}^{-1}(M_{\rho[v](t)} - M_{\rho[u](t)})M_{\rho[u](t)}^{-1} \Vert_{X_n^\ast\to X_n} \\
     &\leq C \Vert \rho [u](t) - \rho[v](t)\Vert_{L^\infty} \\
     &\leq C \Vert u-v \Vert_{L^\infty_T(X_n)} \exp(C \Vert v \Vert_{L^\infty_T(X_n)})
\end{split}
\end{align}
it is easy to see that
\begin{align*}
    \Vert F(u)-F(v) \Vert_{L^\infty_T(X_n)}
    &\leq C\Vert u-v \Vert_{L^\infty_T(X_n)}\exp(C\Vert v \Vert_{L^\infty_T(X_n)})( 1 + \Vert u \Vert_{L^\infty_T(X_n)} \\
    &\quad + \Vert v \Vert_{L^\infty_T(X_n)} + \Vert v \Vert_{L^\infty_T(X_n)}^2 + \Vert u_0^{\epsilon} \Vert_{L^2} + \Vert f^\epsilon\Vert_{L^2_T(L^2)})
\end{align*}
for some constant $C=C(n,T)$, which shows that $F$ is locally Lipschitz continuous and in particular continuous.

\noindent\textbf{2. Invariance of $\Bset_T$ under $F$.} Next we verify that $F$ maps the ball $E_T$ into itself provided the time $T$ is sufficiently small. Firstly, let $R>0$ such that $\Vert u \Vert_{L^\infty_T(X_n)}\leq R$ for any $u\in \Bset_T$. For $t\in(0,T]$ we have
\begin{align*}
    \Vert F(u)(t)-u_0^\epsilon\Vert_{X_n} &\leq C \Vert (\rho_0^\epsilon u_0^\epsilon)^\ast + \int_0^t A(u)(t')dt' - M_{\rho[u](t)}u_0^\epsilon\Vert_{X_n^\ast} \\
    &\leq C\Vert\rho_0^\epsilon - \rho[u](t)\Vert_{L^\infty} \Vert u_0^\epsilon \Vert_{L^2} + C \int_0^t\Vert u \Vert_{X_n}^2 + \Vert u \Vert_{X_n} + \Vert f^\epsilon\Vert_{L^2}dt',
\end{align*}
so that
\begin{align*}
    \Vert F(u)-u_0^\epsilon \Vert_{L^\infty_T(X_n)} \leq C\Vert \rho_0^\epsilon-\rho[u]\Vert_{L^\infty_T(L^\infty)} \Vert u_0^\epsilon \Vert_{L^2} + CT(R^2+R+\Vert f^\epsilon \Vert_{L^\infty_T(L^2)})
\end{align*}
for a constant $C=C(n)$ independent of $T$. Hence we can choose $T(n)\in (0,T]$ sufficiently small such that $F$ maps $\Bset_{T(n)}$ into itself.

\noindent\textbf{3. Relative compactness.} It is left to show that the image $F(\Bset_{T(n)})$ is relatively compact in the set $\Bset_{T(n)}$. To do so we use the version of the Arzela-Ascoli Theorem from \cite[Theorem 47.1]{munkres2000topology}: If $F(\Bset_{T(n)})$ is equicontinuous, and the sets $F_t =\{v(t) : v\in F(\Bset_{T(n)})\}$ are relatively compact in $X_n$ for all $t\in[0,T(n)]$, then it follows that $F(\Bset_{T(n)})$ is relatively compact in $\Bset_{T(n)}$. Notice that the latter condition is obvious here since $F_t\subset \{\varphi\in X_n : \Vert\varphi - u_0^{\epsilon} \Vert_{X_n}\leq 1 \}$ is a bounded set in the finite-dimensional vector space $X_n$. In order to show that $F(\Bset_{T(n)})$ is equicontinuous, let $u\in \Bset_{T(n)}$ and $t_1,t_2\in [0,T(n)]$. 
Writing
\begin{align*}
    F(u)(t_1)-F(u)(t_2) = &(M^{-1}_{\rho[u](t_1)}-M^{-1}_{\rho[u](t_2)}) \Bigl((\rho_0^\epsilon u_0^{\epsilon})^\ast + \int_0^{t_1} \Bop(u)(t')\, dt'\Bigr) \\
    &+ M^{-1}_{\rho[u](t_2)}\Bigl(\int_{t_2}^{t_1} \Bop(u)(t')\, dt'\Bigr),
\end{align*}
and using similar estimates as before, we deduce 
\begin{align*}
    \Vert F(u)(t_1)-F(u)(t_2) \Vert_{X_n} \leq C\vert t_1-t_2 \vert + C \vert t_1-t_2 \vert^{\frac{1}{2}}
\end{align*}
for some constant $C>0$ independent of $t_1,t_2$ and $u$, which yields equicontinuity.

\noindent\textbf{4. Conclusions.} By the Schauder fixed point Theorem \ref{schauder:fp} there exists a fixed point $u^{\epsilon,n} \in C([0,T(n)];X_n)$, i.e., $F(u^{\epsilon,n}) =u^{\epsilon,n}$. It is straightforward to verify that $F(u)$ belongs to $C^1((0,T];X_n)$ for any $u\in C([0,T];X_n)$, which implies that the fixed point is continuously differentiable in time: $u^{\epsilon,n} \in C^1((0,T(n)];X_n)$.

Differentiating (\ref{uweak:ne}) with respect to $t$, choosing $\varphi= u^{\epsilon,n}(t)$, and integrating by parts (which is justified by the smoothness of $\rho^{\epsilon,n}$ and $u^{\epsilon,n}$ in space, and where the boundary values in the periodic case vanish due to the periodicity) yields
\begin{align*}
    \frac{1}{2}\frac{d}{dt} \int_\Omega \rho^{\epsilon,n} \vert u^{\epsilon,n}\vert^2 \dx + \frac{\mu_\ast}{2} \int_\Omega \vert \nabla u^{\epsilon,n} + \nabla^T u^{\epsilon,n}\vert^2 \dx \leq \sqrt{\rho^\ast} \Vert f^\epsilon \Vert_{L^2(\Omega)} \Vert \sqrt{\rho^{\epsilon,n}} u^{\epsilon,n} \Vert_{L^2(\Omega)}
\end{align*}
for every $t\in [0,T(n)]$. Here we used the cancellation (\ref{cancellation}). By Gronwall's inequality it follows that,
\begin{align*}
    \Vert(\sqrt{\rho^{\epsilon,n}}u^{\epsilon,n})(t) \Vert^2_{L^2(\Omega)} &\leq \Vert\sqrt{\rho^\epsilon_0} u_0^\epsilon\Vert^2_{L^2(\Omega)} + 2\rho^\ast \Vert f^\epsilon \Vert_{L^2((0,t)\times\Omega)}^2 e^{2t} \\
    &\leq \rho^\ast \Vert u_0\Vert^2_{L^2(\Omega)} + 2 \rho^\ast \Vert f \Vert_{L^2((0,t)\times\Omega)}^2 e^{2t}
\end{align*}
for some constant $C>0$ independent of $\epsilon$ and $n$. Consequently, $u^{\epsilon,n}$ is uniformly bounded in $L^\infty_{T(n)}(L^2) \cap L^2_{T(n)}(H^1)$, with a bound independent of $\epsilon,n$ and $T(n)$. This allows us to iterate the above procedure to obtain a solution which is defined for all times up to $T$.

Finally, the energy equality (\ref{energy:eq:ne}) follows from choosing $\varphi= u^{\epsilon,n}$ in (\ref{uweak:ne:var}).
\end{proof}

\subsubsection{The limit $n\to\infty$}

We next show that for fixed $\epsilon\in (0,1)$ the sequence of weak solutions $(\rho^{\epsilon,n}, u^{\epsilon,n})_{n\in\N}$ constructed in Lemma \ref{ex:ne} converges weakly up to a subsequence to some function pair $(\rho^\epsilon, u^\epsilon)$ satisfying
\begin{align}\label{uweak:eps}
\begin{split}
    &\int_\Omega \rho^{\epsilon}(t)u^{\epsilon}(t)\cdot \varphi(t) \dx - \int_\Omega \rho_0^\epsilon u_0^\epsilon\cdot\varphi(0)\dx - \int_0^t\int_\Omega \rho^{\epsilon}u^{\epsilon}\cdot \partial_t \varphi \dx dt' \\
    &=\int_0^t \int_{\Omega} \rho^{\epsilon}(u^{\epsilon}\otimes u^{\epsilon}): \nabla\varphi - \frac{\mu_e^{\epsilon}}{2}(\nabla u^{\epsilon} + \nabla^T u^{\epsilon}):(\nabla \varphi + \nabla^T \varphi) \\
    &- \frac{\mu_o^{\epsilon}}{2}(\nabla u^{\epsilon \perp} + \nabla^\perp u^{\epsilon}):(\nabla \varphi + \nabla^T \varphi) + \rho^{\epsilon} f^\epsilon\cdot\varphi \dx dt'
\end{split}
\end{align}
for any function $\varphi \in C^1([0,T];V_\sigma)$ and time $t\in[0,T]$, where $\mu_i^\epsilon= \nu_i(\rho^\epsilon)$, $i\in\{e,o\}$.

Since $(u^{\epsilon,n})_{n\in\N}$ is uniformly bounded in $L^\infty_T(L^2) \cap L^2_T(H^1)$, there exists $u^\epsilon\in L^\infty_T(L^2) \cap L^2_T(H^1)$ ($u^\epsilon\in L^\infty_T(L^2) \cap L^2_T(H^1_0)$ in the Dirichlet case) such that up to a subsequence
\begin{align*}
    u^{\epsilon,n} \rightharpoonup^\ast u^\epsilon, \quad \text{in } L^\infty_T(L^2), \quad
    \nabla u^{\epsilon,n} \rightharpoonup \nabla u^\epsilon, \quad \text{in } L^2_T(L^2).
\end{align*}
Moreover, \cite[Theorem 2.4]{lions1996mathematical} yields the existence of $\rho^\epsilon\in L^\infty_T(L^\infty)$ such that up to a subsequence
\begin{align*}
    \rho^{\epsilon,n}\to \rho^\epsilon, \quad \text{in } C([0,T]; L^p), \quad \forall p\in [1,\infty)
\end{align*}
with $\rho^\epsilon$ being the solution to
\begin{align*}
    \partial_t\rho^\epsilon + \div(\rho^\epsilon u^\epsilon) =0, \quad \rho^\epsilon(0,\cdot) = \rho_0
\end{align*}
in the sense of distributions, i.e. (\ref{rhoweak}) holds with $\rho^\epsilon$. Consequently $\mu_i^{\epsilon,n}\to \mu_i^\epsilon$ in $C([0,T];L^p)$ for $p\in [1,\infty)$, $i\in\{e,o\}$. Since $\rho^{\epsilon,n}u^{\epsilon,n} \to \rho^\epsilon u^\epsilon$ in $L^2_T(L^r)$ for all $r\in [1,\infty)$ and $u^{\epsilon,n} \rightharpoonup^\ast u^\epsilon$ in $L^\infty_T(L^2)$ it follows that $\rho^{\epsilon,n}u^{\epsilon,n} \otimes u^{\epsilon,n} \rightharpoonup \rho^\epsilon u^\epsilon\otimes u^\epsilon$ in $L^2_T(L^p)$ for any $p\in [1,2)$. All of this together ensures the convergence of the integrals in (\ref{uweak:ne:var}) for fixed $\varphi\in C^1([0,T];X_{n_0})$ for any $n_0\in\N$. Since $\mathrm{span}\{\varphi_n\}_{n\in\N}$ is dense in $V_\sigma$, the integral identity also holds for any function $\varphi\in C^1([0,T];V_\sigma)$. 

Observe that $u^{\epsilon,n}(0)$ is given by the projection in $L^2$ of $u_0^\epsilon$ onto $X_n$. This implies that $u^{\epsilon,n}(0)\to u_0^\epsilon$ in $L^2$. With the same arguments as in \cite{lions1996mathematical} we can therefore take the limit $n\to\infty$ in the energy equality (\ref{energy:eq:ne}) to obtain the energy \textit{in}equality for $(\rho^\epsilon,u^\epsilon)$
\begin{align}\label{energy:ineq:e}
\begin{split}
    &\frac{1}{2}\int_\Omega \rho^{\epsilon}(t) \vert u^{\epsilon}(t)\vert^2 \dx + \int_0^t \int_\Omega \frac{\mu_e^{\epsilon}}{2}\vert \nabla u^{\epsilon} + \nabla^T u^{\epsilon} \vert^2 \dx dt' \\
    &\leq \frac{1}{2} \int_\Omega \rho^{\epsilon}(0) \vert u^{\epsilon}(0)\vert^2 \dx + \int_0^t \int_\Omega \rho^{\epsilon} f^\epsilon\cdot u^{\epsilon} \dx dt'
\end{split}
\end{align}
for all $t\in [0,T]$.

\subsubsection{The limit $\epsilon\to 0$}\label{limit:e}

Analogous arguments as before yield the existence of a pair of functions $(\rho,u) \in C([0,T];L^p)\times (L^\infty_T(L^2)\cap L^2_T(H^1))$ ($u\in L^\infty_T(L^2)\cap L^2_T(H^1_0)$ in the Dirichlet case) which is the weak limit of a subsequence of $(\rho^\epsilon,u^\epsilon)_{\epsilon\in (0,1)}$ and satisfies (\ref{rhoweak}) and
\begin{align*}
    &\int_\Omega \rho(t)u(t)\cdot\varphi(t) \dx - \int_\Omega \rho_0 u_0\cdot\varphi(0)\dx - \int_0^t\int_\Omega \rho u\cdot \partial_t \varphi \dx dt' \\
    &=\int_0^t \int_{\Omega} \rho(u\otimes u): \nabla\varphi - \frac{\mu_e}{2}(\nabla u + \nabla^T u):(\nabla \varphi + \nabla^T \varphi) \\
    &- \frac{\mu_o}{2}(\nabla u^{\perp} + \nabla^\perp u):(\nabla \varphi + \nabla^T \varphi) + \rho f\cdot\varphi \dx dt'
\end{align*}
for any $\varphi\in C^1([0,T];V_\sigma)$ for all $t\in [0,T]$, where $\mu_i = \nu_i(\rho)$, $i\in\{e,o\}$. Since $T>0$ was arbitrary we can find a weak solution $(\rho,u)$ which is defined for all times. This entails the weak formulations (\ref{rhoweak}) and (\ref{uweak}), and the existence part of Theorem \ref{main} for the Dirichlet and periodic case is proven.

By taking the limit $\epsilon\to 0$ in (\ref{energy:ineq:e}) we obtain the energy inequality for $(\rho,u)$.

\subsection{Whole plane case}\label{whole}

We now turn to the existence proof for the whole plane case $\Omega=\R^2$. Similarly to \cite{lions1996mathematical}, we derive the result for the whole plane case from the Dirichlet case.

Applying the Dirichlet case with $\Omega$ chosen as the ball $B_R:= B_R(0)$ of radius $R>0$, and initial data
\begin{align*}
    \rho_0^R:= \rho_0 1_{B_R}, \quad u_0^R:= u_0 1_{B_R}, \quad f^R := f 1_{B_R},
\end{align*}
yields a sequence $(\rho^R,u^R)_{R>0}$ of weak solutions on $B_R$ with
\begin{align*}
    &\rho^R\in L^\infty((0,\infty)\times B_R)\cap C([0,\infty); L^p(B_R)), \quad \forall p\in[1,\infty), \\
    &u^R \in L^2(0,T;(H_0^1(B_R))^2), \quad \forall T>0 .
\end{align*}
In the following we are going to show that up to a subsequence there holds
\begin{align*}
    \rho^R\to \rho \; \text{ in } C([0,T];L^p(B_M)), \qquad u^R \rightharpoonup u \; \text{ in } L^2(0,T;(H^1(B_M))^2) 
\end{align*}
as $R\to\infty$, for any $T,M>0$ and $p\in[1,\infty)$, where $(\rho,u)$ is a weak solution of (\ref{odd}) on the whole plane $\R^2$.

Indeed, by Remark \ref{rem:ev} the energy inequality (\ref{energy:ineq}) holds for the weak solutions $(\rho^R,u^R)$, which entails that the bound
\begin{align*}
    \Vert u^R\Vert_{L^2(0,T;H^1(\R^2))} \leq C_T, \qquad \Vert \sqrt{\rho^R} u^R \Vert_{L^\infty(0,T;L^2(\R^2))} \leq C_T
\end{align*}
holds uniformly in $R$, for all $T>0$. Here we view $\rho^R$ and $u^R$ as functions defined on $\R^2$ by extending them by zero onto $\R^2\setminus B_R$. This implies the existence of a function $u\in L^2_{\mathrm{loc}}(0,\infty;(H^1(\R^2))^2)$ such that  
\begin{align*}
    u^R\rightharpoonup^\ast u \; \text{ in } L^\infty(0,T;(L^2(\R^2))^2), \quad 
    \nabla u^R\rightharpoonup \nabla u \; \text{ in } L^2(0,T;(L^2(\R^2))^4), \quad \forall T>0,
\end{align*}
as $R\to\infty$, up to a subsequence. By \cite[Theorem 2.5]{lions1996mathematical} there exists a function $\rho \in L^\infty ((0,\infty)\times\R^2)$ satisfying the transport equation (\ref{odd})$_1$ in the weak sense with initial datum $\rho_0$ and velocity vector field $u$, and 
\begin{align*}
    \rho^R\to \rho \; \text{ in } C([0,T];L^p(B_M)), \quad \forall p\in [1,\infty),\, T,M>0
\end{align*}
as $R\to\infty$, up to a subsequence. Hence also $\mu_i^R:= \nu_i(\rho^R)\to\mu_i$ in $C([0,T];L^p(B_M))$ for any $p\in [1,\infty)$, $T,M>0$, and $i\in \{e,o\}$. Observe that also $\rho^R f^R\to \rho f$ in $L^2(0,T;(L^2(\R^2))^2)$ as $R\to\infty$. 

Notice that $\rho^R u^R\otimes u^R$ is bounded in $L^2(0,T;(L^{\frac{4}{3}}(\R^2))^4)$, and $\mu_e^R(\nabla u^R+\nabla^T u^R)$, $\mu_o^R(\nabla (u^R)^\perp + \nabla^\perp u^R)$ and $\rho^R f^R$ are bounded in $L^2((0,T)\times\R^2)$. Using the weak formulation (\ref{uweak}) we thus have for every $\varphi\in L^2(0,T;(H^2(\R^2))^2)$ with $\div\varphi=0$, 
\begin{align*}
    \vert\langle \partial_t(\rho^R u^R), \varphi \rangle \vert \leq C \Vert\varphi\Vert_{L^2(0,T;H^2(\R^2))} 
\end{align*}
for some constant $C>0$ independent of $R$. Since additionally $\rho^R \vert u^R\vert^2$ is bounded in $L^\infty(0,T;(L^1(\R^2))^2)$, it follows by \cite[Theorem 2.5]{lions1996mathematical} that 
\begin{align*}
    \sqrt{\rho^R}u^R \to \sqrt{\rho}u \; \text{ in } L^p(0,T;(L^r(B_M))^2), \quad \forall p\in (2,\infty), \; r\in [1, \frac{2p}{p-2}) ,
\end{align*}
for any $M>0$.
This ensures the convergence of the integrals in the weak formulation as $R\to\infty$, so that $(\rho,u)$ satisfies (\ref{uweak}) with $\Omega=\R^2$.


\subsection{Proof of Corollary \ref{rem:ev:conv}}\label{ev:conv:proof}

Let $(\nu_o^\epsilon)_{\epsilon\in (0,1)}$ denote the sequence from Corollary \ref{rem:ev:conv}, and let $(\rho^\epsilon, u^\epsilon)_{\epsilon\in (0,1)}$ be the corresponding sequence of weak solutions of (\ref{odd}) constructed in Theorem \ref{main}. The energy inequality (\ref{energy:ineq}) entails that $(u^\epsilon)_{\epsilon \in (0,1)}$ is uniformly bounded in $L^\infty(0,T;(L^2(\Omega))^2)\cap L^2(0,T;(\dot{H}^1(\Omega))^2)$ for every $T>0$.

Similar arguments as in Subsection \ref{whole} yield the existence of a function pair $(\rho,u)$ with
\begin{align*}
    \rho^\epsilon\to\rho \text{ in } C([0,T];L^p(B_M)), \quad u^\epsilon \rightharpoonup u \text{ in } L^2(0,T;(H^1(B_M))^2)
\end{align*}
as $\epsilon\to 0$, for any $T,M>0$ and $p\in [1,\infty)$, where $(\rho,u)$ is a weak solution of (\ref{odd}) with shear and odd viscosity coefficients $\mu_e$ and $c_0$, respectively. To prove that $(\rho,u)$ is in fact a weak solution of (\ref{NSintro}) it remains to verify the cancellation
\begin{align}\label{uvarphi:cancel}
    \int_\Omega (\nabla u^\perp + \nabla^\perp u): (\nabla \varphi + \nabla^T\varphi)\dx =0
\end{align}
for all $\varphi$ as in (\ref{uweak}). Indeed, using integration by parts twice and that $u$ and $\varphi$ are both divergence-free the left hand side can be written as
\begin{align*}
    &\int_\Omega 2(\d_1 u_1 -\d_2u_2)(\d_1\varphi_2 +\d_2 \varphi_1) - 2(\d_1u_2 + \d_2u_1)(\d_1\varphi_1 -\d_2\varphi_2)\dx \\
    &= \int_\Omega -2u_1(\d_{11}\varphi_2 + \d_{12}\varphi_1) + 2u_2 (\d_{12}\varphi_2 + \d_{22}\varphi_1) \\
    &\quad + 2u_2(\d_{11} \varphi_1 - \d_{12}\varphi_2) + 2u_1(\d_{12}\varphi_1 - \d_{22}\varphi_2)\dx \\
    &= \int_\Omega -2u_1\d_{11}\varphi_2 + 2u_2 \d_{22}\varphi_1 + 2u_2\d_{11}\varphi_1 - 2u_1\d_{22}\varphi_2 \dx \\
    &= \int_\Omega 2\d_1u_1\d_1\varphi_2 - 2\d_2u_2 \d_2 \varphi_1 + 2\d_2u_2\d_1 \varphi_2 - 2\d_1u_1\d_2\varphi_1 \dx \\
    &= \int_\Omega 2\d_1u_1(\d_1\varphi_2 + \d_2\varphi_1) - 2\d_1 u_1 (\d_1\varphi_2 + \d_2\varphi_1)\dx \\
    &=0 .
\end{align*}
This implies that the odd viscosity terms in the weak formulation (\ref{uweak}) vanishes and hence $(\rho,u)$ is a weak solution of (\ref{NSintro}).


\section{The Stationary Navier-Stokes equations}\label{stationary}

In this section we consider the stationary Navier-Stokes equation (\ref{odd:stat}). We first explain the main steps of the existence proof of Theorem \ref{main:stat}, and then study examples of the flow under certain symmetry assumptions on the density.

\subsection{Proof of Theorem \ref{main:stat}}

In this paragraph we explain the strategy of the proof of Theorem \ref{main:stat}. Since most arguments coincide with the ones in the proof of \cite[Theorem 1.5]{he2020solvability}, we omit the details of the proof and only describe the main ideas. We look for weak solutions which are of Frolov's form 
\begin{align}\label{frolov:sect4}
    (\rho,u)=(\eta(\phi), \nabla^\perp\phi),
\end{align}
for the stream function $\phi$, and some given function $\eta\in L^\infty(\R;[0,\infty))$. 

The existence proof of Theorem \ref{main:stat} is carried out in two steps: The first step is to formulate the boundary value problem for the stream function and to show the existence of a weak solution to this problem. In a second step one goes back to the original equation and shows that any pair which is of Frolov's form (\ref{frolov:sect4}) is indeed a weak solution to (\ref{odd:stat}). In the presence of odd viscosity the main changes are in step one since the boundary value problem for the stream function is modified. However, since the problem stays elliptic (due to $\mu_e\geq\mu_\ast>0$) the arguments from \cite{he2020solvability} still work.

The regularity results of Theorem \ref{main:stat} are proven using the elliptic equation (\ref{phi:stat}) for $\phi$, with the same arguments as in \cite{he2020solvability}, which we omit here.

\textbf{Step 1:} We begin by formulating the boundary value problem for the stream function. Firstly, we transform the equation for $u$ into a fourth order elliptic equation for the stream function $\phi$ by applying the two-dimensional curl operator $\nabla^\perp\cdot$ to the momentum equation (\ref{odd:stat})$_1$. A straightforward calculation yields
\begin{align*}
    \nabla^\perp\cdot \div(\mu_e(\nabla u + \nabla^Tu)) &= L_{\mu_e} \phi, \\
    \nabla^\perp\cdot \div(\mu_o(\nabla u^\perp + \nabla^\perp u)) &= A_{\mu_o}\phi,
\end{align*}
where the operators $L_{\mu_e}$ and $A_{\mu_o}$ are defined as
\begin{align*}
    L_{\mu_e} &= (\partial_{22}-\partial_{11})(\mu_e(\partial_{22}- \partial_{11})) + (2\partial_{12})(\mu_e(2\partial_{12})), \\
    A_{\mu_o} &= (\partial_{22}-\partial_{11})(\mu_o(2\partial_{12})) - (2\partial_{12})(\mu_o(\partial_{22}- \partial_{11})) .
\end{align*}
The momentum equation therefore becomes
\begin{align*}
    \mathcal{L} \phi = -\nabla^\perp\cdot f + \nabla^\perp\cdot \div(\eta(\phi) \nabla^\perp\phi\otimes\nabla^\perp\phi),
\end{align*}
where $\mathcal{L}= L_{\mu_e}+A_{\mu_o}$. Observe that $\mathcal{L}$ is an elliptic operator. Here we call an operator $\tilde{\mathcal{L}}=\sum_{\vert\alpha\vert,\vert\beta\vert\leq 2} D^\alpha (a_{\alpha\beta} D^\beta)$ elliptic if there exists some $\delta >0$ such that
\begin{align*}
    \delta\vert\xi\vert^2 \leq \sum_{\vert\alpha\vert= \vert\beta \vert=2} \mathrm{Re}\Bigl(a_{\alpha\beta}(x) \xi_\beta \xi_\alpha \Bigr) \leq \delta^{-1} \vert\xi\vert^2
\end{align*}
for almost every $x\in\Omega$ and every $\xi=(\xi_\alpha)_{\vert \alpha \vert=2}$, $\xi_\alpha \in \R$. 
We write
\begin{align*}
    L_{\mu_e} &= \partial_{11}(\mu_e\partial_{11}) + \partial_{22}(\mu_e \partial_{22}) - \partial_{11}((\mu_e-\frac{\mu_\ast}{2})\partial_{22}) - \partial_{22}((\mu_e-\frac{\mu_\ast}{2})\partial_{11}) \\
    &\quad + 2\partial_{12}((\mu_e-\frac{\mu_\ast}{2})\partial_{12}) + 2 \partial_{21}(\mu_e\partial_{21}) =: \sum_{\vert\alpha\vert= \vert\beta \vert=2} D^\alpha (a_{\alpha \beta}^e D^\beta), \\
    A_{\mu_o} &= \partial_{22}(\mu_o\partial_{12}) + \partial_{22}(\mu_o \partial_{21}) - \partial_{12}(\mu_o\partial_{22})-\partial_{21}(\mu_o \partial_{22}) - \partial_{11}(\mu_o\partial_{12}) - \partial_{11}(\mu_o \partial_{21}) \\
    &\quad + \partial_{12}(\mu_o\partial_{11}) + \partial_{21}(\mu_o \partial_{11}) =: \sum_{\vert\alpha\vert= \vert\beta \vert=2} D^\alpha (a_{\alpha \beta}^o D^\beta),
\end{align*}
so that for $\xi = (\xi_\alpha)_{\vert\alpha\vert=2}$, $\xi_\alpha\in \R$, there holds
\begin{align*}
    \frac{\mu_\ast}{2}\vert\xi\vert^2\leq  \sum_{\vert\alpha\vert= \vert\beta \vert=2} a_{\alpha\beta}^e(x)\xi_\alpha\xi_\beta\leq 2\mu^\ast \vert\xi\vert^2 , \quad 
    \sum_{\vert\alpha\vert= \vert\beta \vert=2} a_{\alpha \beta}^o (x) \xi_\alpha\xi_\beta =0
\end{align*}
for almost every $x\in\Omega$. Hence, the operator $\mathcal{L}$ with coefficients $a_{\alpha\beta}= a^e_{\alpha\beta} + a^o_{\alpha \beta}$ for $\vert\alpha\vert,\vert\beta\vert\leq 2$, is elliptic.

Let $\phi_0\in H^{\frac{3}{2}}(\partial\Omega)$ and $\phi_1\in H^{\frac{1}{2}}(\partial\Omega)$ be given functions. Pairing the equation for $\phi$ with boundary conditions we obtain the following boundary value problem for the stream function
\begin{equation}\label{phi:stat}
   \left\{ \begin{array}{l}
       \mathcal{L} \phi = -\nabla^\perp\cdot f + \nabla^\perp\cdot \div(\eta(\phi) \nabla^\perp\phi\otimes\nabla^\perp\phi), \\
       \phi |_{\partial\Omega} = \phi_0, \; \frac{\partial \phi}{\partial n} |_{\partial\Omega} = \phi_1 .
    \end{array}\right.
\end{equation} 
We define weak solutions of (\ref{phi:stat}) as follows.

\begin{definition}[Weak solutions of the elliptic equation]\label{def:weaksol,stat}
Let $\Omega\subset\R^2$ be a bounded simply connected $C^{1,1}$ domain, let $\eta\in L^\infty(\R;[0,\infty))$, and $\nu_e\in C(\R;[\mu_\ast, \mu^\ast])$, $\nu_o\in C(\R;[\mu_\ast, -\mu_\ast])$, $\mu_\ast,\mu^\ast >0$. Moreover, let $\phi_0\in H^{\frac{3}{2}}(\partial \Omega)$, $\phi_1\in H^{\frac{1}{2}}(\partial\Omega)$, and $f\in (H^{-1}(\R^2))^2$. We call $\phi\in H^2(\Omega)$ a weak solution to (\ref{phi:stat}) if
\begin{align*}
    \phi|_{\partial\Omega}=\phi_0, \quad \frac{\partial\phi}{\partial n} |_{\partial\Omega} = \phi_1, \quad \text{ in the trace sense, }
\end{align*}
and
\begin{align}\label{phiweak}
\begin{split}
    &\int_\Omega \mu_e((\partial_{22}\phi-\partial_{11}\phi)(\partial_{22} \psi-\partial_{11}\psi)+(2\partial_{12}\phi)(2\partial_{12}\psi))\dx \\
    &+\int_\Omega \mu_o((2\partial_{12}\phi)(\partial_{22}\psi-\partial_{11}\psi)-(\partial_{22}\phi-\partial_{11}\phi)(2\partial_{12}\psi))\dx \\
    &= \int_\Omega \rho(\nabla^\perp\phi\otimes\nabla^\perp\phi): \nabla \nabla^\perp\psi \dx + \langle f,\nabla^\perp\psi\rangle_{H^{-1}(\Omega)\times H_0^1(\Omega)}
\end{split}
\end{align}
for any $\psi\in H_0^2(\Omega)$, where $\rho=\eta(\phi)$ and $\mu_i = \nu_i(\rho)$, $i\in\{e,o\}$.
\end{definition}

We have the following result concerning the existence of weak solutions to the elliptic equation (\ref{phi:stat}).

\begin{lemma}
Let $\eta\in L^\infty(\R;[0,\infty))$, $\nu_e\in C(\R;[\mu_\ast, \mu^\ast])$, $\nu_o\in C(\R;[-\mu^\ast,\mu^\ast])$ and $f\in (H^{-1}(\Omega))^2$ be given. Then for any functions $\phi_0\in H^{\frac{3}{2}}(\partial \Omega)$, $\phi_1\in H^{\frac{1}{2}}(\partial\Omega)$ there exists a weak solution $\phi\in H^2(\Omega)$ to the boundary value problem (\ref{phi:stat}).
\end{lemma}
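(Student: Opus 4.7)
The plan is a Schauder-type fixed-point argument, following the strategy of \cite{he2020solvability}. First I would lift the boundary data: by trace theory on the $C^{1,1}$ domain $\Omega$ there exists $\Phi \in H^2(\Omega)$ with $\Phi|_{\partial\Omega} = \phi_0$ and $\partial_n\Phi|_{\partial\Omega} = \phi_1$, satisfying $\|\Phi\|_{H^2} \leq C(\|\phi_0\|_{H^{3/2}(\partial\Omega)} + \|\phi_1\|_{H^{1/2}(\partial\Omega)})$, so that it suffices to find $\tilde\phi \in H^2_0(\Omega)$ with $\phi = \Phi+\tilde\phi$ solving \eqref{phi:stat}.

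For a frozen $w \in H^2_0(\Omega)$, set $\rho_w := \eta(\Phi+w) \in L^\infty(\Omega)$, $\mu_e^w := \nu_e(\rho_w)$ and $\mu_o^w := \nu_o(\rho_w)$, and consider the bilinear form $a_w: H^2_0\times H^2_0\to\R$ given by the left-hand side of \eqref{phiweak} with these coefficients. The key structural observation, mirroring the cancellation \eqref{cancellation} used in the evolutionary case, is that the odd part of $a_w$ is antisymmetric at the bilinear-form level, so $a_{\mu_o^w}(u,u) = 0$ for every $u$. Combined with $\mu_e^w \geq \mu_\ast$ and the identity $\int_\Omega \partial_{11}u\,\partial_{22}u\,dx = \int_\Omega(\partial_{12}u)^2\,dx$ (integration by parts on $H^2_0$), this yields
\begin{equation*}
a_w(u,u) \geq \mu_\ast \int_\Omega\bigl[(\partial_{22}u-\partial_{11}u)^2 + 4(\partial_{12}u)^2\bigr]\,dx = \mu_\ast \|D^2 u\|_{L^2}^2,
\end{equation*}
which together with the Poincar\'e inequality gives coercivity of $a_w$ on $H^2_0$. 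Lax-Milgram then furnishes a unique $\mathcal{T}(w) := \tilde\phi \in H^2_0$ solving $a_w(\tilde\phi,\psi) = \ell_w(\psi)$ for every $\psi \in H^2_0$, where $\ell_w$ collects $f$, the nonlinear contribution $\int_\Omega \rho_w(\nabla^\perp(\Phi+w)\otimes\nabla^\perp(\Phi+w)):\nabla\nabla^\perp\psi\,dx$ and the correction $-a_w(\Phi,\psi)$ from the lift (all bounded on $H^2_0$ thanks to the 2D embedding $H^2\hookrightarrow W^{1,4}$).

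I would then verify continuity and compactness of $\mathcal{T}:H^2_0\to H^2_0$ using the compact 2D embeddings $H^2\hookrightarrow C^0$ and $H^2\hookrightarrow W^{1,4}$: along a strongly convergent sequence $w_n\to w$ the quadratic nonlinearity passes to the limit in $W^{1,4}$, and $\nu_e(\rho_{w_n}), \nu_o(\rho_{w_n})$ converge pointwise almost everywhere thanks to the uniform convergence of $\Phi+w_n$ and the continuity of $\nu_e$ and $\nu_o$. Some care is needed because $\eta$ is merely $L^\infty$, which can be handled by first approximating $\eta$ by continuous functions and using the uniform coercivity bound.

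The hardest step is the a priori bound required by the fixed-point argument. A direct test of $a_w(\tilde\phi,\cdot)$ by $\tilde\phi$ only yields $\|\tilde\phi\|_{H^2}\leq C(1+\|w\|_{H^2}^2)$ and does not produce an invariant ball. To close this I would exploit that \eqref{phi:stat} is the two-dimensional curl of the momentum equation \eqref{odd:stat} for $u = \nabla^\perp\phi$: at the level of a Galerkin approximation of $H^2_0$, Brouwer's fixed-point theorem applies to the nonlinear finite-dimensional problem provided one has the Leray-type energy bound obtained by testing the momentum equation by $u^n - \nabla^\perp\Phi \in (H^1_0(\Omega))^2$. The cancellation \eqref{cancellation} kills the odd-viscosity contribution, $\div(\rho^n u^n)=0$ annihilates the leading convective term, and the remaining cross terms with $\nabla^\perp\Phi$ are controlled via 2D Sobolev embeddings ($H^1\hookrightarrow L^4$, Ladyzhenskaya) and Young's inequality, if necessary after replacing $\Phi$ by a Leray-Hopf-type extension of the boundary data that is small in the relevant norm. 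Passing to the limit in the Galerkin approximation then delivers the weak solution $\phi \in H^2(\Omega)$ claimed, exactly along the lines of \cite{he2020solvability}.
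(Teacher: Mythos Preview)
Your approach is correct and matches the paper's: the paper does not actually prove this lemma but simply refers to \cite{he2020solvability} (Section~2) and Leray \cite{leray1933etude}, and your outline (lifting the boundary data, coercivity via the antisymmetry of the odd bilinear form, Galerkin/Brouwer with a Leray--Hopf-type a~priori bound obtained by testing the underlying momentum equation against $u-\nabla^\perp\Phi$) is exactly that route. Your key structural observation---that the odd contribution satisfies $a_{\mu_o}(u,u)=0$ so coercivity rests entirely on $\mu_e\ge\mu_\ast$---is the same one the paper records in its ellipticity computation $\sum a^o_{\alpha\beta}\xi_\alpha\xi_\beta=0$ in Step~1 of Section~4.1.
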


We are not going to give a proof of the preceding lemma here, but one can for example follow the lines of \cite[Section 2]{he2020solvability}; see also \cite{leray1933etude} for the classical stationary Navier-Stokes equations.

\textbf{Step 2:} We now go back to the original equation. Given a boundary value $g$, we show that for suitably chosen boundary values $\phi_0$ and $\phi_1$, a pair of Frolov's form (\ref{frolov:sect4}) is a weak solution of (\ref{odd:stat}) provided $\phi$ is a weak solution of (\ref{phi:stat}).

Notice that if $\phi\in H^2_{\mathrm{loc}}(\Omega)$ satisfies (\ref{phiweak}), then $(\rho,u) = (\eta(\phi),\nabla^\perp\phi)$ satisfies (\ref{uweak:stat}). Moreover, $\div(\rho u)=0$ holds in the distribution sense. Therefore, we have indeed proved that one can obtain a weak solution $(\rho,u)$ of (\ref{odd:stat}) from a weak solution $\phi$ of (\ref{phi:stat}) in the following way.

\begin{lemma}
Let $\eta\in L^\infty(\R;[0,\infty))$, $\nu_e\in C(\R;[\mu_\ast, \mu^\ast])$, $\nu_o\in C(\R;[-\mu^\ast,\mu^\ast])$ and $f\in (H^{-1}(\Omega))^2$ be given. Let $g\in (H^{\frac{1}{2}}(\partial \Omega))^2$ satisfy the zero-flux condition (\ref{zeroflux}) and let $C_0\in\R$. Moreover, let $\phi_0\in H^{\frac{3}{2}}(\partial \Omega)$ and $\phi_1\in H^{\frac{1}{2}}(\partial \Omega)$ be defined by
\begin{align*}
    \phi_0(\gamma(s))=-\int_0^s g\cdot n\,d\theta + C_0, \quad 
    \phi_1(\gamma(s)) = (u_0\cdot n^\perp)(\gamma(s)), 
\end{align*}
for $s\in [0,2\pi)$, where $\gamma:[0,2\pi)\to\partial\Omega$ is a parametrisation of the boundary $\partial\Omega$. If $\phi\in H^2(\Omega)$ is a weak solution of (\ref{phi:stat}), then the pair
\begin{align*}
    (\rho,u)=(\eta(\phi),\nabla^\perp \phi)
\end{align*}
is a weak solution of (\ref{odd:stat}).
\end{lemma}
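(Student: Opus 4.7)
The plan is to verify each of the four items in Definition \ref{oddstat:weak:def} for the pair $(\rho,u)=(\eta(\phi),\nabla^\perp\phi)$. The regularity claim is immediate: since $\phi\in H^2(\Omega)\hookrightarrow C(\overline{\Omega})$ in dimension two and $\eta\in L^\infty(\R;[0,\infty))$, one has $\rho=\eta(\phi)\in L^\infty(\Omega;[0,\infty))$, while $u=\nabla^\perp\phi\in (H^1(\Omega))^2$. The first divergence constraint $\div u=\d_2\d_1\phi-\d_1\d_2\phi=0$ holds distributionally because $\phi\in H^2(\Omega)$.

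To obtain $\div(\rho u)=0$ I would approximate $\eta$ by $\eta_k\in C^1(\R)$ with $\Vert\eta_k\Vert_{L^\infty}\le\Vert\eta\Vert_{L^\infty}$ and $\eta_k\to\eta$ pointwise. The Sobolev chain rule then yields $\eta_k(\phi)\in H^1(\Omega)$ with $\nabla\eta_k(\phi)=\eta_k'(\phi)\nabla\phi$ a.e., so
\begin{align*}
    \div(\eta_k(\phi)\nabla^\perp\phi)
    =\eta_k'(\phi)\,\nabla\phi\cdot\nabla^\perp\phi+\eta_k(\phi)\,\div(\nabla^\perp\phi)=0
\end{align*}
a.e.\ in $\Omega$, the first summand vanishing by the pointwise identity $\nabla\phi\cdot\nabla^\perp\phi\equiv 0$. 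Dominated convergence gives $\eta_k(\phi)\nabla^\perp\phi\to\eta(\phi)\nabla^\perp\phi$ in $L^2(\Omega)$, and passing to the limit in distributions yields $\div(\rho u)=0$.

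For the trace condition $u|_{\partial\Omega}=g$, I would decompose $u=\nabla^\perp\phi$ along the boundary into normal and tangential components. A direct computation shows $(\nabla^\perp\phi)\cdot n=-\d_\tau\phi$ and $(\nabla^\perp\phi)\cdot n^\perp=\d_n\phi$, where $\tau=n^\perp$ is the tangent oriented consistently with $\gamma$. Differentiating $\phi_0(\gamma(s))=-\int_0^s g\cdot n\,d\theta+C_0$ in $s$ gives $\d_\tau\phi|_{\partial\Omega}=-g\cdot n$, while $\d_n\phi|_{\partial\Omega}=\phi_1=g\cdot n^\perp$ by the other boundary datum. Combining these identities yields $u|_{\partial\Omega}\cdot n=g\cdot n$ and $u|_{\partial\Omega}\cdot n^\perp=g\cdot n^\perp$, hence $u|_{\partial\Omega}=g$ in $(H^{1/2}(\partial\Omega))^2$.

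It remains to deduce the variational identity (\ref{uweak:stat}) from (\ref{phiweak}). Given any $\varphi\in (H_0^1(\Omega))^2$ with $\div\varphi=0$, the simple connectedness of $\Omega$ provides a stream function $\psi\in H^2(\Omega)$ with $\varphi=\nabla^\perp\psi$; the condition $\varphi|_{\partial\Omega}=0$ forces $\nabla\psi|_{\partial\Omega}=0$, so $\psi\in H^2_0(\Omega)$. A direct expansion of $\nabla u+\nabla^T u$ and $\nabla u^\perp+\nabla^\perp u$ in terms of the Hessian of $\phi$, and analogously for $\varphi$ and $\psi$, shows that the contractions $\tfrac12(\nabla u+\nabla^T u):(\nabla\varphi+\nabla^T\varphi)$ and $\tfrac12(\nabla u^\perp+\nabla^\perp u):(\nabla\varphi+\nabla^T\varphi)$ reproduce, respectively, the $\mu_e$- and $\mu_o$-bilinear forms appearing in (\ref{phiweak}) evaluated at $(\phi,\psi)$; the convective and forcing terms match verbatim through $(u\otimes u):\nabla\varphi=(\nabla^\perp\phi\otimes\nabla^\perp\phi):\nabla\nabla^\perp\psi$ and $\langle f,\varphi\rangle=\langle f,\nabla^\perp\psi\rangle$. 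Testing (\ref{phiweak}) with this $\psi$ therefore recovers (\ref{uweak:stat}). The main obstacle I anticipate is the algebraic bookkeeping of this last matching, especially verifying that the antisymmetric cross contributions in the $\mu_o$-part assemble into precisely the bilinear form acting on the stream functions; the construction of $\psi\in H^2_0$ from a divergence-free $(H^1_0)^2$ field on a simply connected $C^{1,1}$ domain is classical but also deserves care.
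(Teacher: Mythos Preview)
Your proposal is correct and follows the same route as the paper, which in fact only records the two observations that $(\rho,u)=(\eta(\phi),\nabla^\perp\phi)$ satisfies (\ref{uweak:stat}) whenever $\phi$ satisfies (\ref{phiweak}), and that $\div(\rho u)=0$ holds distributionally for $\phi\in H^2$. Your write-up supplies the details the paper omits (the approximation argument for $\div(\rho u)=0$, the trace computation, and the stream-function representation $\varphi=\nabla^\perp\psi$ with $\psi\in H^2_0(\Omega)$ for divergence-free test fields); just remember to normalize $\psi$ by a constant so that $\psi|_{\partial\Omega}=0$, since $\varphi|_{\partial\Omega}=0$ only forces $\psi$ to be constant on $\partial\Omega$.
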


The regularity results in Theorem \ref{main:stat} (2) follow from successively applying the differential operator $\d_j$, $j=1,2$, to the elliptic equation (\ref{phi:stat})$_1$ and using elliptic theory from \cite{dong2011higher} to deduce bounds on $\Vert\nabla^{k+2}\phi \Vert_{L^2(\Omega)}$. We omit the details and instead refer to \cite[Paragraph 1.3.2]{he2020solvability}.


\subsection{Proof of Corollary \ref{rem:stat:conv}}

Let $(\phi^\epsilon)_{\epsilon\in (0,1)}$ denote the weak solutions of (\ref{phi:stat}) corresponding to the odd viscosity coefficients $(\nu_o^\epsilon)_{\epsilon\in (0,1)}$. If we can show that the sequence of stream functions are uniformly bounded in $H^2(\Omega)$, then the sequence $(u^\epsilon)_{\epsilon \in (0,1)}$ is uniformly bounded in $(H^1(\Omega))^2$, and the claim follows by compactness arguments and the cancellation (\ref{uvarphi:cancel}).

To show the uniform boundedness of $(\phi^\epsilon)_{\epsilon \in (0,1)}$ in $H^2(\Omega)$ one can follow the lines of \cite[Paragraph 1.3.1]{he2020solvability}, which we are only giving a rough sketch of here.

By the inverse trace theorem and Whitney's extension theorem $\phi_0$ can be extended onto $\R^2$ (still denoted by $\phi_0$) such that $\phi_0\in H^2(\R^2)$ and $\frac{\d\phi_0}{\d n}|_{\d \Omega}=\phi_1$. Next we let $\delta\in (0,1)$ and $\zeta^\delta\in C_c^\infty(\R^2)$ be a smooth cut-off function satisfying $\zeta^\delta=1$ near the boundary $\d\Omega$, and $\zeta^\delta(x)=0$ for $\mathrm{dist}(x, \d\Omega)\geq \delta$. Let $\phi_0^\delta :=\phi_0\zeta^\delta$. Notice that $\phied := \phi^\epsilon - \phi_0^\delta \in H^2_0(\Omega)$, so that $\Vert\phied\Vert_{H^2(\Omega)} \sim \Vert\Delta\phied \Vert_{L^2(\Omega)}$.

The idea is to prove the uniform boundedness via a contradiction argument. Supposing that a subsequence $(\phi^{\epsilon_n})_{n\in\N}$ satisfies $\Vert \phi^{\epsilon_n} \Vert_{H^2(\Omega)}\to\infty$ as $n\to\infty$, we can test (\ref{phiweak}) by $\phied$ to derive an inequality of the form
\begin{align*}
    1\leq C\int_\Omega \eta(\phi^{\epsilon_n}) (\nabla^\perp g_n \otimes \nabla^\perp \phi_0^\delta):\nabla\nabla^\perp g_n \dx + \frac{C}{\Vert \tilde{\phi}^{\epsilon_n, \delta} \Vert_{H^2(\Omega)}},
\end{align*}
where $g_n= (\Vert \tilde{\phi}^{\epsilon_n,\delta} \Vert_{H^2(\Omega)})^{-1} \tilde{\phi}^{\epsilon_n,\delta}$. Using the uniform boundedness of the sequence $(g_n)_{n\in\N}$ in $H_0^2(\Omega)$ and compactness arguments one can show that the right hand side of the inequality converges to zero as $n\to\infty$ and $\delta\to 0$, which is a contradiction.

\subsection{Examples of parallel, concentric and radial flows}

In this paragraph we look at more concrete examples of solutions $(\rho,u)$ of the stationary Navier-Stokes system (\ref{odd:stat}) with external force $f=0$, under some symmetry assumptions on the density function, namely parallel, concentric and radial flows. The examples we consider are adapted from the case $\mu_o\equiv 0$ in \cite{he2020solvability}.

In the following we consider a weak solution $(\rho,u)$ of (\ref{odd:stat}) on $\R^2$. We write $\partial_j = \partial_{x_j}$, $j=1,2$, and make use of polar coordinates
\begin{align*}
    (x_1,x_2)=(r\cos\theta,r\sin\theta),
\end{align*}
so that
\begin{align*}
    \nabla_x = e_r\partial_r + \frac{e_\theta}{r}\partial_\theta, \quad \text{ where } \quad e_r=
    \begin{pmatrix}
        \frac{x_1}{r} \\ \frac{x_2}{r}
    \end{pmatrix}, \quad e_\theta =
    \begin{pmatrix}
        \frac{x_2}{r} \\ -\frac{x_1}{r}
    \end{pmatrix} .
\end{align*}

\begin{example}[Parallel flow]\label{ex:parallel}
Let $\rho=\rho(x_2)$ in $\R^2$ with $\partial_{2}\rho\neq 0$. The divergence-free condition $\div u=0$ and $\div(\rho u)=0$ imply that $u$ is of the form
\begin{align*}
    u=u_1(x_2)e_1,
\end{align*}
where $u_1$ is a function only depending on the variable $x_2$, and $e_1= (1\; 0)^T$ is the first standard basis vector in $\R^2$. Consequently, we see that
\begin{align*}
    &\div(\rho u\otimes u)=0,\\ 
    &\div(\mu_e(\nabla u+\nabla^Tu))= \partial_2(\mu_e\partial_2 u_1)e_1, \\ 
    &\div(\mu_o(\nabla u^\perp + \nabla^\perp u))= \partial_2(\mu_o\partial_2 u_1)e_2,
\end{align*}
where $e_2=(0\; 1)^T$. Hence, the system (\ref{odd:stat}) reads
\begin{align}\label{eq:parallel}
    \begin{pmatrix}
        -\partial_2(\mu_e\partial_2 u_1) + \partial_1\pi \\
        -\partial_2(\mu_o\partial_2 u_1) + \partial_2\pi
    \end{pmatrix}
    =
    \begin{pmatrix}
        0 \\ 0
    \end{pmatrix} .
\end{align}
The first line implies that $\pi$ is of the form $\pi(x_1,x_2) = x_1 \alpha(x_2) + \beta (x_1)$ for some functions $\alpha,\beta$, whereas the second line implies that $\partial_2 \pi$ must be independent of $x_1$. Therefore, $\alpha\equiv C$ for some constant $C\in\R$, which yields the two equations
\begin{align}\label{u:parallel}
    \partial_2(\mu_e\partial_2 u_1)=C , \quad \partial_2(\mu_o \partial_2u_1) =0 .
\end{align}
\end{example}

\begin{example}[Concentric flow]\label{ex:concentric}
Let $\rho=\rho(r)$ in $\R^2$ with $\partial_r\rho\neq 0$. The divergence-free condition $\div u=0$ and $\div(\rho u)=0$ imply that $u$ is of the form
\begin{align*}
    u=rg(r)e_\theta
\end{align*}
for some function $g$ only depending on $r$. It is straightforward to compute
\begin{align*}
    &\div(\rho u\otimes u)=-r\rho g^2 e_r, \\
    &\div(\mu_e(\nabla u+\nabla^Tu))=\frac{\partial_r(r^3\mu_e \partial_r g)}{r^2}e_\theta, \\
    &\div(\mu_o(\nabla u^\perp+\nabla^\perp u))= \frac{\partial_r(\mu_o r^3\partial_r g)}{r^2}e_r ,
\end{align*}
and thus the system (\ref{odd:stat}) reads
\begin{align}\label{eq:concentric}
    (-r\rho g^2-\frac{\partial_r(\mu_o r^3\partial_r g)}{r^2} + \partial_r\pi)e_r +(-\frac{\partial_r(r^3\mu_e \partial_r g)}{r^2} - \frac{1}{r}\partial_\theta\pi)e_\theta =0 .
\end{align}
By the linear independence of $e_r$ and $e_\theta$, each of the two bracket terms must be zero. The equation in $e_\theta$ direction yields $\pi(r,\theta)= \tilde{\alpha}(r) \theta +\tilde{\beta}(r)$ for some functions $\tilde{\alpha}, \tilde{\beta}$ depending only on $r$, and substituting $\partial_r\pi$ into the equation in $e_r$ direction yields
\begin{align}\label{u:concentric}
    \partial_r(r^3\mu_e\partial_r g)=Cr
\end{align}
for some constant $C\in\R$.
\end{example}


\begin{example}[Radial flow]\label{ex:radial}
Let $\rho=\rho(\theta)$ in $\R^2$ with $\partial_\theta\rho\neq 0$. The divergence-free condition $\div u=0$ and $\div(\rho u)=0$ imply that $u$ is of the form
\begin{align*}
    u=\frac{h(\theta)}{r}e_r
\end{align*}
for some function $h$ depending only on $\theta$. Consequently there holds
\begin{align*}
    &\div(\rho u\otimes u)=-\rho\frac{h^2}{r^3}e_r, \\
    &\div(\mu_e(\nabla u+\nabla^Tu)) = \frac{\partial_\theta(\mu_e \partial_\theta h)}{r^3}e_r - 2\frac{\partial_\theta(\mu_eh)}{r^3} e_\theta ,\\
    &\div(\mu_o(\nabla u^\perp+\nabla^\perp u))= -2\frac{\partial_\theta (\mu_oh)}{r^3}e_r - \frac{\partial_\theta(\mu_o\partial_\theta h)}{r^3}e_\theta ,
\end{align*}
and thus the system (\ref{odd:stat}) reads
\begin{align*}
    (-\rho\frac{h^2}{r^3}-\frac{\partial_\theta(\mu_e \partial_\theta h)}{r^3}+2\frac{\partial_\theta(\mu_oh)}{r^3}+\partial_r\pi)e_r + (2\frac{\partial_\theta(\mu_eh)}{r^3}+\frac{\partial_\theta(\mu_o \partial_\theta h)}{r^3}-\frac{1}{r}\partial_\theta\pi)e_\theta =0 .
\end{align*}
The equation in $e_\theta$ direction implies that
\begin{align*}
    \pi(r,\theta)=2\frac{\mu_eh}{r^2}+\frac{\mu_o\partial_\theta h}{r^2} + \hat{\alpha}(r)
\end{align*}
for some function $\hat{\alpha}$ depending only on $r$, and substituting $\partial_r\pi$ into the equation in $e_r$ direction yields
\begin{align}\label{ex:radial:eq}
    \rho h^2+\partial_\theta(\mu_e\partial_\theta h)-2\partial_\theta (\mu_o h) + 4\mu_e h + 2\mu_o\partial_\theta h=C
\end{align}
for some constant $C\in\R$.

If we take $\mu_e\equiv 0$, equation (\ref{ex:radial:eq}) becomes
\begin{align}\label{ex:radial:muo}
    C=\rho h^2 - 2\partial_\theta(\mu_o h)+2\mu_o\partial_\theta h .
\end{align}
This means that for a general domain $\Omega$ and general boundary value we can not expect $u\in H^1_{\mathrm{loc}}(\Omega)$ if $\mu_o$ has a jump. 
Indeed, let $\mu_o$ and $\rho$ be given by
\begin{align*}
    \mu_o= 1_{[0,\pi)} + 21_{[\pi,2\pi)}, \quad \rho = \nu_o^{-1}(1)1_{[0,\pi)} + \nu_o^{-1}(2)1_{[\pi,2\pi)} ,
\end{align*}
and assume there exists a solution $u\in H^1_{\mathrm{loc}}(\Omega)$, i.e. $h\in H^1_{\mathrm{loc}}([0,2\pi))$. We test (\ref{ex:radial:muo}) with $\varphi \in C_c^\infty((0,2\pi))$ to derive
\begin{align*}
    C\int_0^{2\pi}\varphi\, d\theta = \nu_o^{-1}(1) \int_0^\pi h^2 \varphi \, d\theta + \nu_o^{-1}(2) \int_\pi^{2\pi} h^2 \varphi \, d\theta - 2h(\pi)\varphi(\pi).
\end{align*}
If $h(\pi)\neq 0$ this implies
\begin{align*}
    \delta_\pi = \frac{\nu_o^{-1}(1)}{2h(\pi)} h^2 1_{[0,\pi)} + \frac{\nu_o^{-1}(2)}{2h(\pi)} h^2 1_{[\pi,2\pi)} - \frac{C}{2h(\pi)} ,
\end{align*}
where $\delta_\pi$ denotes the Dirac function. This is a contradiction since the right hand side is contained in $L^1_{\mathrm{loc}}((0,2\pi))$, whilst the left hand side is not. If $h(\pi)=0$, then
\begin{align*}
    \nu_o^{-1}(1)h^2 1_{[0,\pi)} + \nu_o^{-1}(2) h^2 1_{[\pi,2\pi)} = C \quad \text{in } \mathcal{D}'((0,2\pi)),
\end{align*}
which implies that $h^2$ must have a jump if $C\neq 0$, or $h=0$ if $C=0$. In the former case we obtain a contradiction since $h$ can not have a jump if it is contained in $H^1_{\mathrm{loc}}([0,2\pi))$. In the latter case $u$ can not satisfy the boundary condition for non-zero boundary value.


Consequently, it appears to be necessary to impose some regularity assumptions on $\mu_o$ in order to obtain the existence of solutions of (\ref{odd}) and (\ref{odd:stat}) if the shear viscosity in the momentum equation vanishes.
\end{example}





\section*{Acknowledgements}
The author would like to thank her Ph.D. supervisor JProf. Xian Liao for many helpful discussions and suggestions, and Dr. Francesco Fanelli for mentioning the interesting model to her during his stay at KIT.

\printbibliography

(R. Zimmermann) Institute for Analysis, Karlsruhe Institute of Technology, Englerstraße 2, 76131 Karlsruhe, Germany.

\textit{Email address:} {\tt rebekka.zimmermann@kit.edu}

\end{document}